\documentclass[english,twoside,11pt]{article}

\usepackage[lmargin=1in,rmargin=1in,tmargin=1in,bmargin=1in]{geometry}

\usepackage{float}
\usepackage{graphicx}
\usepackage{color}
\usepackage{tikz}
\usetikzlibrary{backgrounds}

\usepackage[margin=20pt]{caption}

\usepackage{amsmath}
\usepackage{amssymb}
\usepackage{amsthm}
\usepackage{mathrsfs}

\usepackage[english]{babel}
\usepackage{flafter}
\usepackage{array}
\usepackage{paralist}


\graphicspath{{.}{graphics/}}

\captionsetup[figure]{labelfont=bf,font=small}

\setlength{\unitlength}{1cm}

\newtheorem{theorem}{Theorem}

\newtheorem*{corollary*}{Corollary}
\newtheorem{lemma}[theorem]{Lemma}

\newtheorem*{proposition*}{Proposition}

\newtheorem*{property*}{Property}

\theoremstyle{definition}
\newtheorem{definition}[theorem]{Definition}
\newtheorem*{definition*}{Definition}

\theoremstyle{remark}

\newtheorem*{remark*}{Remark}

\newtheorem*{example*}{Example}


\newcommand{\HH}{\mathcal{H}}
\newcommand{\VV}{\mathcal{V}}
\newcommand{\EE}{\mathcal{E}}

\newcommand{\OM}{\Omega}
\newcommand{\AUG}{\mathrm{AUG}}

\setcounter{secnumdepth}{2}

\usepackage{framed}
\usepackage{algorithm}
\usepackage[noend]{algorithmic}


\input{mathdots}

\pagestyle{plain}

\title{Non-Uniform Robust Network Design in Planar Graphs}

\author{
\begin{minipage}{\linewidth}
\begin{center}
David Adjiashvili\\[0.3\baselineskip]
\normalsize Institute for Operations Research, ETH Z\"urich\\
\normalsize addavid@ethz.ch
\end{center}
\end{minipage}
}


\date{April 17, 2015}

\begin{document}

\maketitle

\begin{abstract}
Robust optimization is concerned with constructing solutions that remain 
feasible also when a limited number of resources is removed from the solution. 
Most studies of robust combinatorial optimization to date made 
the assumption that every resource is equally vulnerable, and that the 
set of scenarios is implicitly given by a single budget constraint.
This paper studies a robustness model of a different kind.
We focus on \textbf{bulk-robustness}, a model recently introduced~\cite{bulk} 
for addressing the need to model non-uniform failure patterns in systems.

We significantly extend the techniques used in~\cite{bulk} to design
approximation algorithm for bulk-robust network design problems in
planar graphs. Our techniques use an augmentation framework, combined
with linear programming (LP) rounding that depends on a planar embedding of the input graph.
A connection to cut covering problems and the dominating set problem in
circle graphs is established. Our methods use few of the specifics 
of bulk-robust optimization, hence it is conceivable that they can 
be adapted to solve other robust network design problems.
\end{abstract}

\section{Introduction}

Robust optimization is concerned with finding solutions that perform well
in any one of a given set of scenarios. Many paradigms were proposed for
robust optimization is the last decades. Some models assume
uncertainty in the cost structure of the optimization problem. Such robust models
typically have as scenarios different cost structures for the resources in the
system, and ask to find a solution whose worst-case cost is as small as possible.
Another kind of robustness postulates uncertainty in the feasible set
of the optimization problem. Typically, in such models scenarios correspond to
different realizations of the feasible set. A minimum-cost solution is then sought 
that is feasible in any possible realization of the feasible set. 

This paper deals with the latter class of robust models, i.e. ones
that incorporate uncertainty in the feasible set. Concretely, 
we are interested in \textbf{robust network design problems}, that are generally
defined as follows. The input specifies a graph $G=(V,E)$, a set of 
\textbf{failure scenarios} $\OM$, consisting of subsets of the nodes and edges of $G$, 
and some connectivity requirement. The goal is to find a minimum-cost subgraph
of $G$ satisfying the connectivity requirement, even when the elements 
in any one single scenario are removed from the solution. Different problems
are obtained for different types of connectivity requirement and when different 
representations of the scenario set are assumed. 

Most existing models of robust network design assume \textbf{uniform scenario 
sets}. Given interdiction costs for the resources, and a bound $B\in \mathbb{Z}_{\geq 0}$, 
such models assume that the adversary can remove \textbf{any} subset of resources of
interdiction cost at most $B$. In fact, unit interdiction costs are almost always assumed.
While such uniform robust network design problems often enjoy good algorithms, 
they also often do not reflect realistically the uncertainty in the modeled system, which
feature highly non-uniform failure patterns.

In a recent paper, Adjiashvili, Stiller and Zenklusen~\cite{bulk} introduced 
a new model for robust network design called \textbf{bulk-robustness}, specifically 
designed to model such highly non-uniform failure patterns. In bulk-robust optimization
failure scenarios are given explicitly, as a list of subsets of the resources. These
subsets may be arbitrary, and in particular, they are allowed to vary in size. The
goal, as in robust network design problems, is to find a minimum-cost set of resources 
that contains a feasible solution, even when the resources in any one of the scenarios
are removed.

The authors justify the model by bringing many example from health care optimization,
computer systems, digitally controlled systems, military applications, financial systems
and more. For example, in computer systems, different components of the network rely 
on the different resources, such a databases, power sources etc. At down-times of such
resources, the components that can not operate properly are exactly those
that depend of the downed resource. While no uniform failure model can capture
such failure patterns, bulk-robutness seems to be a suitable choice.

In~\cite{bulk} the authors study a number of problems in the bulk-robust model, 
including the $s$-$t$ connection problem, and the spanning tree problem. In 
particular, the approximability of these problems is studied in general graphs.
The goal of this paper is to extend the existing tool set available for designing
approximation algorithm for robust network design in this model. In this paper
we focus on the important special case of \textbf{planar graphs}. We show 
a widely-applicable method for computing approximate solutions to bulk-robust
network design problem in planar graphs. 

\subsection{Results and methods}

For an integer $r\in \mathbb{Z}_{\geq 0}$ we let $[r]=\{1,\cdots,r\}$ and 
$[r]_0=\{0,1,\cdots,r\}$.
The \textbf{bulk-robust network design problem} is defined as follows.
Given an undirected graph $G = (V,E)$, a weight function 
$w:E\rightarrow \mathbb{Z}_{\geq 0}$, a connectivity requirement $\mathcal{C}$
and a set of $m$ \textit{scenarios} $F_1, \cdots, F_m$, each comprising a set of 
edges $F_i \subseteq E$, find a minimum-cost set of edges $S\subseteq E$, such that 
$(V, S\setminus F_i)$ satisfies $\mathcal{C}$ for every $i\in [m]$. When 
$\mathcal{C}$ is the requirement that two specific nodes $s,t\in V$ are to
be connected we obtain the \textbf{bulk-robust $s$-$t$ connection problem}. When
$\mathcal{C}$ is the requirement that all nodes are pair-wise connected, we obtain
the \textbf{bulk-robust spanning tree problem}. Other bulk-robust problems
such as \textbf{bulk-robust Steiner tree} and \textbf{bulk-robust survivable network design} are
obtained analogously, by choosing the appropriate $\mathcal{C}$.
We let $n = |V|$, and $k=\max_{i\in [m]} |F_i|$ denote the maximum size of a scenario. 
The parameter $k$ is called \textbf{the diameter} of the instance.
Adjiashvili et. al.~\cite{bulk} proved the following 
theorem.

\begin{theorem}[Adjiashvili et. al.~\cite{bulk}]\label{thm:oldapx}
 The bulk-robust $s$-$t$ connection problem admits a polynomial $13$-approximation
algorithm in the case $k = 2$. The bulk-robust spanning tree problem admits an 
$(\log n + \log m)$-approximation algorithm.
\end{theorem}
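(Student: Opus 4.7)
I would prove the two statements separately, using covering/LP techniques for the spanning tree and an augmentation framework for the $k=2$ $s$-$t$ case.

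\textbf{Spanning tree bound.} A set $S \subseteq E$ is feasible if and only if for every scenario $i \in [m]$ and every proper nonempty $U \subsetneq V$, $\delta_G(U) \cap S \not\subseteq F_i$, i.e.\ the set $\delta_G(U) \setminus F_i$ is hit by $S$. This is a covering formulation with one constraint per pair (cut, scenario). I would write the natural LP relaxation $\min \sum_e w_e x_e$ subject to $\sum_{e \in \delta(U) \setminus F_i} x_e \geq 1$ for all $i$ and $U$, which can be separated by computing, for each $i$, a minimum cut in $G$ after zeroing out capacities on edges of $F_i$; hence the LP is solvable in polynomial time. Applying randomized rounding of the LP solution and then iterating via greedy set-cover-style augmentation on the still-uncovered constraints yields an $O(\log N)$-approximation, where $N$ is the number of effective constraints. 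Restricting attention to the minimal cuts that actually matter for spanning connectivity (of which at most $n-1$ matter per scenario, by the fundamental-cut structure of spanning subgraphs) gives $N = O(nm)$, and hence a factor $O(\log n + \log m)$.

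\textbf{$s$-$t$ connection with $k=2$.} I would start by computing a shortest $s$-$t$ path $P_0$ and letting $B = \{i : F_i \cap P_0 \neq \emptyset\}$. Scenarios outside $B$ are already handled by $P_0$, so it suffices to augment $P_0$ with a cheap set of ``detours'', one for each $i \in B$. Because $|F_i| \leq 2$, each $F_i$ deletes at most two edges from $P_0$, leaving only a bounded gap to be bridged. Any feasible $S^*$ must itself contain an $s$-$t$ path avoiding $F_i$, which gives a lower bound on $\mathrm{OPT}$ in terms of the total detour cost. I would then formulate a covering LP over detour options, round it, and compare the rounded cost against $w(P_0) + \mathrm{OPT}$; since $w(P_0) \leq \mathrm{OPT}$, the total cost is a constant multiple of $\mathrm{OPT}$.

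\textbf{Main obstacle.} The main technical difficulty lies in the second part: extracting a \emph{small} constant (such as $13$) rather than a generic $O(1)$. A single detour may cover several scenarios simultaneously, and conversely one scenario may force the use of an edge shared across many detours; controlling this overlap and translating the fractional LP solution into an integral one with a modest blow-up requires a delicate, case-based combinatorial analysis guided by the parity of $|F_i \cap P_0|$ and by how consecutive detours interact along $P_0$. The spanning-tree bound is conceptually easier because it reduces to a well-structured covering problem, but even there the counting that yields $\log n + \log m$ (rather than the naive $n + \log m$ arising from the raw constraint count) requires isolating the $O(n)$ essential cuts per scenario.
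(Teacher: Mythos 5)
This theorem is imported verbatim from~\cite{bulk}; the present paper gives no proof of it, so your sketch has to be judged against what a complete argument would need, and in both halves it has a genuine gap. For the spanning tree bound, your covering formulation and separation oracle are fine, but the step that is supposed to produce the factor $\log n+\log m$ --- restricting to ``the $O(n)$ essential cuts per scenario by the fundamental-cut structure of spanning subgraphs'' --- is false. Hitting the $n-1$ fundamental cuts of a fixed spanning tree does not imply connectivity (on the triangle $v_1v_2v_3$ with tree $v_1v_2,v_2v_3$, the single edge $v_1v_3$ crosses both fundamental cuts yet leaves $v_2$ isolated), and in general exponentially many cut constraints are binding, so naive randomized rounding only gives $\log$ of an exponential quantity. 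The bound actually comes from a different mechanism: either the greedy algorithm for submodular cover applied to $f(S)=\sum_{i\in[m]}\bigl(n-c_i(S)\bigr)$, where $c_i(S)$ is the number of components of $(V,S\setminus F_i)$ --- feasibility is $f(S)=m(n-1)$ and Wolsey's analysis yields a $\log\bigl(m(n-1)\bigr)=O(\log n+\log m)$ factor --- or, if you insist on LP rounding, a Karger-style cut-counting argument to union-bound over all cuts. Your stated justification supports neither.

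For the $k=2$ case the sketch stops exactly where the proof begins. A covering LP over detours has integrality gap $\Theta(\log N)$ in general, so the entire content of the constant $13$ is a structural argument showing why \emph{this} covering instance admits a constant-factor rounding; you correctly flag this as ``the main obstacle'' but supply no mechanism for overcoming it. Two further issues: the lower bound you invoke (``any feasible $S^*$ contains an $F_i$-avoiding path, which bounds the total detour cost'') is not valid as stated, since one cannot sum per-scenario detour costs against $\mathrm{OPT}$ --- the correct comparison is that $\mathrm{OPT}$ restricted to the unchosen edges is itself a feasible augmentation, so the optimal augmentation cost is at most $\mathrm{OPT}$ (this is exactly the role of Lemma~\ref{lem:paths}). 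And you collapse the two augmentation levels into one: the argument in~\cite{bulk} (mirrored in Section~\ref{sec:alg} here) first handles all singleton sub-scenarios, which on a path is an exactly solvable interval covering problem, and only then the size-two scenarios; skipping the intermediate level loses the structure on which the constant-factor analysis of the final step relies.
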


On the complexity side, the authors prove set cover hardness for all 
considered bulk-robust counterparts, implying a conditional $\log m$
lower bound on the approximation in general graphs. In terms of the
parameter $k$, the authors show that in general graphs a sub-exponential
approximation factor is likely not achievable for certain variants of the 
bulk-robust $s$-$t$ connection problem.

\subsubsection{Contribution}
Our goal is to prove a significant strengthening of Theorem~\ref{thm:oldapx}
for the special case where the input graph is planar. Concretely,
we prove the following theorem.

\begin{theorem}\label{thm:planar}
 The bulk-robust $s$-$t$ connection and the bulk-robust spanning tree problems 
admit polynomial $O(k^2)$-approximation algorithms on planar graphs.
\end{theorem}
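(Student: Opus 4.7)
The plan is to use an \emph{augmentation} framework. First, compute a minimum-weight $s$-$t$ path $P_0$ (respectively, a minimum spanning tree $T_0$). Since every bulk-robust feasible solution is in particular feasible for the underlying non-robust connectivity problem, $w(P_0)\le \mathrm{OPT}$ (resp.\ $w(T_0)\le \mathrm{OPT}$). It then suffices to compute an augmentation $A\subseteq E$ of cost $O(k^2)\cdot\mathrm{OPT}$ such that $P_0\cup A$ (resp.\ $T_0\cup A$) is bulk-robust. Because any optimal bulk-robust solution $S^{*}$ also yields a valid augmentation $S^{*}\setminus P_0$, the optimum of the augmentation problem itself is bounded above by $\mathrm{OPT}$.

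Next, I would recast augmentation as a \emph{cut-covering} problem: for every scenario $F_i$ and every $s$-$t$ cut $C$ with $P_0\cap C\subseteq F_i$ (so the base contributes nothing to $C$ after the failure), the augmentation must contain an edge of $C\setminus F_i$; an analogous formulation handles the spanning-tree case using the cuts induced by the components of $T_0\setminus F_i$. The natural LP relaxation has value at most $\mathrm{OPT}$ and admits polynomial separation via minimum-cut computations in $G\setminus F_i$, so the remaining challenge is to round it within $O(k^2)$.

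Planarity enters through the rounding. Fix an embedding of $G$ in which $P_0$ (resp.\ $T_0$) lies on the outer boundary, and pass to the planar dual, where every $s$-$t$ cut becomes a dual $s^{*}$-$t^{*}$ path. Since $|F_i|\le k$, the failure splits the base into at most $k+1$ pieces, so only $O(k)$ essential cuts arise per scenario. Each candidate augmentation edge $e\in E\setminus F_i$ corresponds in the dual to an arc across the base solution, and these arcs naturally define a \emph{circle graph}; repairing the gaps of the base under scenario $F_i$ then becomes a dominating-set instance on this circle graph, for which polynomial-time approximations are known. I would then round the global cut-cover LP by coupling it with the per-scenario arc-cover structure.

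The $O(k^2)$ factor is expected to arise from two compounding sources of $O(k)$: one from the augmentation reduction, where a single scenario can damage up to $k$ distinct pieces of the base solution that must all be independently repaired; and one from the circle-graph rounding, whose integrality loss is governed by the $O(k)$ arcs relevant to a single scenario. The main obstacle, and where the bulk of the technical work should lie, is in the third step: formalizing the per-scenario reduction to dominating set on a planar-embedding-induced circle graph, and coordinating the global LP rounding with the per-scenario circle-graph rounding so that the two factors of $k$ do not compound into something worse than $k^2$.
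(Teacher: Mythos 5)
Your high-level ingredients (augmentation, an LP cut-covering formulation with min-cut separation, planarity, and a circle-graph dominating-set rounding) all appear in the paper, but there is a genuine gap in how you set up the augmentation, and it breaks the rounding. You propose a \emph{single} augmentation step from the base solution $P_0$ to a fully robust solution. After deleting a scenario $F$ with $|F|=k$, the base $P_0$ can shatter into up to $k+1$ fragments, and the family of $s$-$t$ cuts $C$ with $(P_0\cup A)\cap C\subseteq F$ that your LP must cover is then governed by all bipartitions of these fragments: it is not a list of $O(k)$ ``essential cuts'' per scenario but an exponentially large family, and feasibility per scenario is a \emph{connectivity} requirement among the fragments rather than a covering of $k$ independent gaps. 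In particular, a scenario no longer corresponds to a single chord (or $O(1)$ chords) of a circle graph, so the per-scenario dominating-set structure you want to exploit is not available, and the claimed compounding of two $O(k)$ factors has no basis as stated.

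The paper avoids exactly this by iterating the augmentation over $k$ levels $\mathrm{P}_1,\dots,\mathrm{P}_k$, where level $i$ only handles sub-scenarios of size at most $i$ of the given failure sets. Since the current solution $X_{i-1}$ is already feasible against all sub-scenarios of size $i-1$, every still-relevant $F$ with $|F|=i$ splits $X_{i-1}$ into \emph{exactly two} components; hence there is exactly one cut per scenario, each face boundary of $(V,X_{i-1})$ contains either $0$ or $2$ edges of $F$, and exactly $i$ faces ``contain'' $F$. That structural fact is what permits splitting the LP mass of each scenario across its $i$ faces (one factor of $i$) and reducing each face to a restricted dominating-set problem on a circle graph whose LP integrality gap is a constant $8$ -- not $O(k)$; summing $8i$ over the $k$ levels yields the second factor of $k$. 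Two further points you would need to repair even within your framework: candidate augmentation edges must first be aggregated into \emph{links} (shortest paths between vertices of the current solution, at a provable loss of a factor $2$) before they can be read as chords of anything, and one cannot in general choose a planar embedding in which $P_0$ or $T_0$ lies on the outer boundary -- the paper instead works with the faces of the embedded subgraph induced by the current solution rather than with the planar dual.
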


The latter result implies constant-factor approximation algorithms
for the case of fixed $k$. In light of the results in~\cite{bulk} this is
qualitatively best possible. 
To complement our algorithmic result we also prove the following stronger
inapproximability result.

\begin{theorem}\label{thm:complexity}
 For some constant $c>0$ it is NP-hard to approximate the bulk-robust
$s$-$t$ connection problem within a factor of $ck$, even when the
input graphs are restricted to series-parallel graphs.
\end{theorem}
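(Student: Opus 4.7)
The plan is to prove Theorem~\ref{thm:complexity} by reduction from the minimum vertex cover problem on $k$-uniform hypergraphs, which by the theorem of Dinur, Guruswami, Khot and Regev is NP-hard to approximate within factor $k - 1 - \epsilon$ for every $\epsilon > 0$, furnishing the linear-in-$k$ gap that the statement requires.

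Given a $k$-uniform hypergraph $H = (V_H, E_H)$ with hyperedges $h_1, \ldots, h_m$ each of size exactly $k$, I would construct a series-parallel graph $G$ as follows. Introduce nodes $s = x_0, x_1, \ldots, x_m = t$ and, between consecutive $x_{i-1}$ and $x_i$, place a parallel block $T_i$ of $k$ edges $\{e_{i,v} : v \in h_i\}$, one per vertex of $h_i$. The result is series-parallel with terminals $x_0$ and $x_m$. Assign edge $e_{i,v}$ the cost $c(v)/\deg_H(v)$. Without any scenarios, a feasible bulk-robust solution $S$ must intersect every block $T_i$, which in $H$ corresponds to picking at least one vertex of each hyperedge.

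To force a global vertex-cover structure I would add, for each vertex $v \in V_H$, a consistency scenario $F_v = \{e_{i,v} : v \in h_i\}$. Under $F_v$ all copies of $v$ are deleted, so every block $T_i$ with $v \in h_i$ must additionally contain some edge $e_{i,v'}$ with $v' \neq v$; equivalently, $S$ corresponds to a $2$-cover of $H$, which is known to inherit the $(k - 1 - \epsilon)$-hardness of plain $k$-uniform vertex cover via a standard reduction. Preceded by a degree-reduction step that turns $H$ into a bounded-degree instance while preserving the hardness gap up to constants, each $F_v$ has size $O(1)$. Padding every scenario up to the common size $k$ by appending dummy edges from a tiny inert parallel gadget attached to $G$ brings the instance diameter to exactly $k$ without altering the optimum, so the reduction outputs genuine bulk-robust instances of diameter $k$ on a series-parallel graph.

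The main obstacle I expect is the cost accounting: showing that the optimum of the constructed bulk-robust instance is within a constant factor of the minimum weighted $2$-cover of $H$. The scaling $c(e_{i,v}) = c(v)/\deg_H(v)$ is chosen so that a fully consistent selection costs exactly $c(C)$, but a feasible bulk-robust solution can in principle mix partial picks of many vertices to cover each hyperedge twice. Showing that such mixed solutions cannot improve over the integral $2$-cover optimum by more than a constant factor is the crux of the argument; it is precisely this step that determines the constant $c > 0$ in the statement, and on which the reduction depends most delicately.
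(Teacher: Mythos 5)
There is a genuine gap, and it is exactly the one you flag at the end but cannot be repaired within your construction: your scenarios do not couple the blocks. In your instance the feasibility condition decomposes completely: a set $S$ is feasible if and only if, for every block $T_i$ and every $v\in h_i$, the set $S\cap T_i$ contains an edge other than $e_{i,v}$, i.e.\ if and only if $|S\cap T_i|\geq 2$ for every $i$. Nothing forces the choice made in block $i$ to agree with the choice made in block $j$, so the optimum of your bulk-robust instance is simply $\sum_i(\text{the two smallest values of } c(v)/\deg_H(v) \text{ over } v\in h_i)$ --- a quantity computable in polynomial time, achieved by a ``mixed'' solution that pays only $1/\deg_H(v)$ per incidence and never commits to a vertex globally. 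On a bounded-degree unit-cost gap instance this value is essentially $2m/d$ regardless of whether the true vertex cover is small or large, so the hardness gap is destroyed, not merely weakened by a constant. The cost normalization $c(v)/\deg_H(v)$ is precisely what makes cherry-picking cheap, and no rescaling fixes it: the problem is structural, namely that you charge per (hyperedge, vertex) incidence while vertex cover must charge per vertex.

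The paper's reduction solves exactly this coupling problem, and the mechanism is worth contrasting with yours. It uses the hardness of vertex cover on $k$-uniform \emph{$k$-partite} hypergraphs (Guruswami--Sachdeva--Saket, factor $\tfrac{k}{2}-1+\tfrac{1}{2k}-\epsilon$) and builds $k$ parallel zero-cost $s$-$t$ paths, one per part, where the $j$-th path lists the hyperedges in an order in which, for each $v$ in part $j$, the hyperedges containing $v$ appear consecutively. The vertex $v$ is then represented by a \emph{single} unit-cost shortcut edge $\alpha_v$ spanning that consecutive stretch: one payment covers all incidences of $v$ at once. The scenario for hyperedge $e$ consists of the $k$ path edges associated with $e$ (one per path), so it severs all $k$ paths and forces the solution to contain some $\alpha_v$ with $v\in e$; this yields an exact cost-preserving correspondence with vertex covers, with no constant lost. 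The $k$-partiteness is what makes the consecutive arrangement possible, which is why the paper needs that specific hardness result rather than the plain $k$-uniform one you cite. Your proposal would need an analogous ``pay once per vertex'' gadget to go through, and as written it does not have one.
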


The expression $ck$ in the latter theorem can be replaced with the concrete
expression $\frac{1}{2}k-1+\frac{1}{2k} - \epsilon$.
Theorem~\ref{thm:complexity} suggests that the dependence of the approximation factor on $k$ is necessary.

For concreteness and clarity of the exposition we prove Theorem~\ref{thm:planar} 
for the $s$-$t$ connection problem. We discuss the necessary minor adaptation
needed for the spanning tree problem later. Furthermore, the methods we employ
use very little of the particularities of bulk-robust optimization, and are
thus likely to be adaptable to other robust problems on planar graphs. 

\subsubsection{Our methods}

Our algorihtm is a combination of combinatorial and LP-based techniques.
On the top level, our algorithm employs an augmentation framework, which
constructs a feasible solution by solving a sequence of relaxations of the
problem. The lowest level corresponds to a simple polynomial problem, while
the last level corresponds to the original instance. The idea of augmentation
is well known in the literature of network design (see e.g.~\cite{augmentation1,%
augmentation2}). 
We use here the variant of the augmentation framework defined for bulk-robust 
optimization in~\cite{bulk}. 

We solve each stage of the augmentation problem by considering a suitable
set cover problem, the analysis of which comprises the core technical
contribution of the paper. Using a combinatorial transformation that amounts
to finding certain shortest paths in the graph, we obtain a simpler covering
problem, which we call \textbf{the link covering problem}. The remainder of the 
algorithm relies on the analysis of the standard LP relaxation of the latter
problem. Using properties of planar graphs, we show that the obtained LP
has an integrality gap of $O(k)$, and that a solution of this quality can be obtained
in polynomial time. Our rounding procedure relies on a decomposition 
according to the planar embedding of the graph, and a connection to the
dominating set problem in circle graphs, for which we develop an LP-respecting
constant-factor approximation algorithm. 
The line of our proof follows that of the proof in~\cite{bulk}. Our main technical
contribution can hence be seen in the additional techniques developed to deal
with planar graphs. As we mentioned before, these new techniques seem more general
than the bulk-robust model, and are likely to be applicable to other network
design problems in planar graphs.

The proof of Theorem~\ref{thm:complexity} relies on a reduction form the minimum
vertex cover problem in $m$-uniform, $m$-partite hypergraphs. 

\subsubsection{Organization}

In the remainder of this section we review related work. In Section~\ref{sec:alg}
we present the algorithm for the bulk-robust $s$-$t$ connection problem, and
prove Theorem~\ref{thm:planar} for this case. The required modification 
for the bulk-robust spanning tree problem and possible extensions of our 
results are discussed in Section~\ref{sec:extensions}. 
The proof of Theorem~\ref{thm:complexity} is brought in Appendix~\ref{apx:complexity}.

\subsection{Related work}

For a comprehensive survey on general models
for robust optimization we refer the reader to the paper of
Bertsimas, Brown and Caramanis~\cite{BertsimasBrownCaramanis}.

Robustness discrete optimization with cost uncertainty was initially studied by Kouvelis and Yu~\cite{KouvelisYu} 
and Yu and Yang~\cite{YuYang}. These works mainly consider the min-max model, where the goal
is to find a solution that minimizes the worst-case cost according to the given set of cost functions.
See the paper of Aissi, Bazgan and Vanderpooten~\cite{Survey_AissiBazgenVanderpooten} for a survey.
%
%
A closely related class of multi-budgeted problem has 
received considerable attention recently
(see e.g.~\cite{ravi_1993_many,%
papadimitriou_2000_approximability,%
chekuri_2011_multibudgeted,%
grandoni2014new} and references therein).
%

%
An interesting class of problems with uncertainty in the feasible set was introduced by 
Dhamdhere, Goyal, Ravi and Singh~\cite{DhamdhereGoyalRaviSingh}. In this two-stage
models the feasibility condition is only fully revealed in the second stage. While resources
can be bought in both stages, they are cheaper in the first stage, in which only partial information
about the feasible set is available. This model was subsequently studied by several other authors 
(see~\cite{GolovinGoyalRavi,%
FeigeJainMahdianMirrokni,%
KhandekarKortsarzMirrokniSalavatipour}).
Different two-stage model was proposed in~\cite{AdjiashviliZenklusen1,%
adjiashvili2013online} 
for the shortest path problem.
Several other important network design problems are motivated by robust optimization.
Such problems include the minimum $k$-edge connected spanning subgraph 
problem~\cite{CheriyanThurimella,%
GabowGoemansTardosWilliamson}
and the survivable network design problem~\cite{Jain,%
survivable}.
Various other robust variants of classical combinatorial optimization problems were proposed. For a survey of these
results we refer the reader to the theses of Adjiashvili~\cite{thesis} and Olver~\cite{Olver}.

\section{Bulk-robust $s$-$t$ connection in planar graphs}\label{sec:alg}

In this section we are concerned with the \textbf{bulk-robust $s$-$t$ connection
problem}, which given an undirected graph $G = (V,E)$, a weight function 
$w:E\rightarrow \mathbb{Z}_{\geq 0}$, two terminals $s,t\in V$ and a set of $m$
scenarios $F_1, \cdots, F_m \subseteq E$, asks to find minimum-cost 
set of edges $S$, such that $S\setminus F_i$ contains an $s$-$t$ path for 
every $i\in [m]$.

The remainder of the section is organized as follows. First we explain the augmentation
framework in general. Then we define the set cover problem for the $i$-th augmentation
step and analyze its properties. Finally, we propose a LP-based approximation algorithm 
for the set cover problem.

\subsection{The augmentation framework}

Consider the following sequence of relaxations of the given instance of the
bulk-robust $s$-$t$ connection problem. For an integer $i\in [k]_0$ define
$\OM_i$ to be the collection of subsets of cardinality at most $i$ of 
the failure scenarios $F_1, \cdots, F_m$, i.e.
$$
\OM_i = \{F\subseteq E \,\mid\, \exists j\in [m] \,\, F\subseteq F_j \,\,\, \wedge \,\,\, |F|\leq i \}.
$$
Now, define the \textbf{$i$-th level relaxation $\mathrm{P}_i$} of our instance to be 
the instance where $\OM$ is replaced by $\OM_i$. Clearly $\mathrm{P}_0$ is 
simply the shortest path problem, as $\OM_0 = \{\emptyset\}$, and $\mathrm{P}_k$
is the original instance. Furthermore, we indeed obtained a sequence of relaxations, 
as any feasible solution for $\mathrm{P}_i$ is feasible for $\mathrm{P}_j$ if $i\geq j$.

The augmentation framework constructs the solution for the given instance by
iteratively adding additional edges to the solution. The solution $X_{i-1}$ 
obtained until the beginning of the $i$-th augmentation step is feasible for 
$\mathrm{P}_{i-1}$. The \textbf{$i$-th augmentation problem} is to augment $X_{i-1}$
with additional edges $A_i$ of minimum cost so that $X_{i-1} \cup A_i$ is feasible for $\mathrm{P}_i$.
We denote by $\AUG_i$ the optimal value the $i$-th augmentation problem.

\subsection{The $i$-th augmentation problem}

In the first iteration, the problem $\mathrm{P}_0$ becomes the shortest $s$-$t$ path 
problem, and is solved in polynomial time by any shortest path algorithm.
We denote by $X_{i-1}\subseteq E$ the set of edges presented to the $i$-augmentation
problem, and let $G_{i-1} = (V, X_{i-1})$.

Consider the $i$-th augmentation problem for some $i\geq 1$. Since $X_{i-1}$
is a feasible solution of $\mathrm{P}_{i-1}$, we know that any scenario 
$\OM_{j}$ for $j<i$ does not disconnect $s$ from $t$ in $G_{i-1}$. The same
may hold true for some scenarios in $\OM_i$. If this holds for all scenarios in 
$\OM_i$, then $X_{i-1}$ is already feasible for $\mathrm{P}_i$, and we can set
$X_i = X_{i-1}$. In the other case, some scenarios in $\OM_i$ are still relevant,
i.e. they disconnect $s$ from $t$ in $G_{i-1}$. 
We abuse notation and let $\OM_i$ denote this set of relevant scenarios.

Let us formulate the $i$-th augmentation problem as a set cover problem.
To this end we let $E_i = E \setminus X_{i-1}$ denote the set of edges
not yet chosen to be included in the solution. Let $\bar V = V[X_{i-1}]$
be the set of nodes incident to $X_{i-1}$. Let us define the following useful
notion of links.

\begin{definition}\label{def:link}
 Let $u,v\in \bar V$ be distinct nodes. Define the \textbf{$u$-$v$ link} $L_{u,v}$
to be any shortest $u$-$v$ path in $(V,E_i)$. Let $\ell_{u,v} = w(L_{u,v})$ denote
the length of this path.
\end{definition}

Consider any optimal solution $A^*$ to the $i$-th augmentation problem. It
is easy to see that $A^*$ is acyclic, i.e. it forms a forest in $(V, E_i)$. 
Instead of looking for forests, however, we would like to restrict our search to
collections of links.

The advantage of using links is twofold. On the one hand, it is possible to
compute all links using a shortest path algorithm in polynomial time. On the
other hand, using links will allows us to decompose the augmentation problem
in a later stage. Let us define the notion of covering with links next.

\begin{definition}\label{def:crossing}
 A link $L_{u,v}$ is said to \textbf{cover $F\in \OM_i$} if 
its endpoints $u$ and $v$ lie on different sides of the cut formed by 
$F$. 
\end{definition}

It is easy to see that a union of links forms a feasible solution to
the augmentation problem if and only if for every set $F\in \OM_i$, at least one
of the links in the union covers $F$. 
This formulation naturally gives rise to our desired set cover problem, defined next.

\begin{definition}\label{def:setcover}
 The \textbf{$i$-th link covering problem} asks to find a collection 
of links of minimum total cost, covering every scenario $F\in \OM_i$.
\end{definition}

We also know that feasible solutions to the $i$-th links covering problem
correspond to feasible solutions of the $i$-th augmentation problem with the
same objective function value, or better. 
The following lemma from~\cite{bulk} states
that any feasible solution to the $i$-th augmentation problem corresponds
to a feasible solution to the $i$-th links covering problem of at most twice
the cost, thus by solving the link covering problem we lose at most a factor of $2$.

\begin{lemma}[Adjiashvili et. al.~\cite{bulk}]\label{lem:paths}
There exists a collection $Q_1, \cdots, Q_r \subseteq E_i$ of paths,
such that for each $F\in \OM_i$, the collection contains at least
one path covering $F$ and $\sum_{j=1}^{r}{w(Q_j)} \leq 2\mathrm{AUG}_i$.
\end{lemma}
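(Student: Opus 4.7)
The plan is to start from an optimal augmentation $A^* \subseteq E_i$ with $w(A^*) = \AUG_i$ and to extract from it a family of links by an Eulerian-tour decomposition of each connected component of $A^*$, losing at most a factor of $2$ in total weight. Before the decomposition, I would observe that in an optimal $A^*$ every connected component $C$ meets $\bar V$ in at least two vertices: a component touching $\bar V$ at no vertex is disjoint from the useful part of $X_{i-1} \cup A^*$, while a component touching $\bar V$ at a single vertex $v$ cannot contribute to any simple $s$-$t$ path in $X_{i-1}\cup A^* \setminus F$, since such a path can only enter and leave $C$ through $v$. Either way the component may be deleted without losing feasibility for $\mathrm{P}_i$, contradicting optimality.

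Next, for each remaining component $C$, I would double all its edges to obtain an Eulerian multigraph, fix an Eulerian circuit, and read off the cyclic sequence $v_1, v_2, \ldots, v_t$ (with $v_{t+1}=v_1$) of $\bar V$-vertices in the order they are visited. This splits the circuit into subwalks $W_1, \ldots, W_t$ where $W_j$ runs from $v_j$ to $v_{j+1}$ and $\sum_j w(W_j) = 2w(C)$. The algorithm then outputs the links $L_{v_j, v_{j+1}}$ for $j=1,\ldots,t$; since each $L_{v_j, v_{j+1}}$ is a \emph{shortest} $v_j$-$v_{j+1}$ path in $(V,E_i)$ by Definition~\ref{def:link}, its weight is bounded by $w(W_j)$. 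Summing over all components gives total weight at most $2w(A^*) = 2\AUG_i$.

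For coverage, I would fix any relevant $F \in \OM_i$ and let $(S,\bar S)$ be the induced $s$-$t$ cut in $X_{i-1}\setminus F$, with $s\in S$ and $t\in \bar S$. Feasibility of $X_{i-1} \cup A^*$ supplies an $s$-$t$ path $P$ in $X_{i-1} \cup A^* \setminus F$, which I would decompose into alternating maximal segments in $X_{i-1}\setminus F$ and in $A^*$; the transition vertices between segments lie in $\bar V$ since they are incident to $X_{i-1}$-edges. Each $X_{i-1}\setminus F$-segment stays on one side of $(S,\bar S)$, so as $P$ moves from $S$ to $\bar S$ some $A^*$-segment must cross, and its two endpoints are $\bar V$-vertices of a common component $C^*$ lying on opposite sides. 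Walking around the cyclic list of $\bar V$-vertices of $C^*$, at least one consecutive pair $(v_j, v_{j+1})$ straddles the cut, and the link $L_{v_j, v_{j+1}}$ covers $F$ by Definition~\ref{def:crossing}. The main technical subtlety is precisely this coverage step: one must ensure that cut-crossing evidence in $A^*$ is always captured by two $\bar V$-vertices of a single cyclic list, and this is exactly what the initial pruning to components with at least two $\bar V$-vertices guarantees.
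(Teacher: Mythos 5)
Your proof is correct, and it follows essentially the argument behind the cited lemma (the paper itself only imports it from~\cite{bulk} without proof): take an optimal augmentation, double each component, cut the Eulerian tour at the $\bar V$-vertices, replace the resulting walks by shortest links, and argue coverage by decomposing an $s$-$t$ path in $X_{i-1}\cup A^*\setminus F$ into $X_{i-1}$- and $A^*$-segments. The only cosmetic point is that consecutive visits $v_j, v_{j+1}$ in the Eulerian order may coincide as vertices, in which case that pair is simply discarded; this is harmless since any pair straddling the cut necessarily consists of distinct vertices.
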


Before proposing an approximation algorithm for the link covering problem let
us make the following additional assumption. We assume that \textbf{every} 
edge $e\in X_{i-1}$ appears in at least one scenario from $\OM_i$. This
assumption does not compromise generality, as any edge not satisfying the 
latter condition can be safely contracted for the solution of the $i$-th
augmentation problem.

\subsection{Approximating the link covering problem}

We focus next on approximating the $i$-th link covering problem. For simplicity
we drop the index $i$ from our notation in this section and use
$X, \OM$ and $\mathrm{P}$ for $X_{i-1}, \OM_i$ and $\mathrm{P}_i$, respectively.
The case $i=1$ is particularly simple and is treated as follows. In the
case $i=1$ the set $X$ simply corresponds to an $s$-$t$ path. This
path can be seen as a line and links can be seen as intervals on this line.
Scenarios $F\in \OM$ are singletons corresponding to edges on this path, and
are interpreted as points on the line. The link covering problem now becomes
an interval covering problem that can be solved exactly in polynomial
time using various algorithms.

In the case $i\geq 2$, which we henceforth assume, the situation is much more complex. 
Consider next the following standard linear programming relaxation of the link covering problem.
We include a variable $x_{u,v}\in [0,1]$ for each link $L_{u,v}$,
where $x_{u,v} = 1$ is interpreted as including the link $L_{u,v}$.
Furthermore, we denote by $cover(F)$ all pairs $\{u,v\} \subseteq \bar V\times \bar V$ 
such that the link $L_{u,v}$ covers $F$. 

\begin{equation*}
\min \left\{ \ell(x) : x_{u,v} \geq 0 \;\forall \{u,v\}\in \bar V\times \bar V, \quad \sum\limits_{\{u,v\}\in cover(F)} x_{u,v} \geq 1 
\quad \forall F \in \OM \right\}
\end{equation*}

It is well-known that in general, the latter LP has an integrality gap as large
as $\log N$, where $N$ is the size of the ground set of the set cover problem.
Our goal here is to show that in the case of the link covering problem and
when the input graph is required to be \textbf{planar}, 
a stronger bound can be proved. Concretely, we will show that a fractional solution $x^*$
to the LP can be rounded in polynomial time to an integral solution with cost at most
$8i\ell(x^*)$, thus also proving a bound of $8i$ on the integrality gap.

\subsubsection{Solving the LP}

Before we turn to our rounding algorithm, let us discuss the problem of solving
the latter LP. Clearly, if $k$ is a fixed constant, the size of the LP
is polynomial, and any polynomial time LP algorithm can be used. In the other
case, when the diameter $k$ is not bounded by a constant, the sets $\OM$ might
have exponential size, as they potentially contain all subsets of cardinality $i\leq k$
of sets of cardinality $k$. It is however not difficult to design a polynomial-time 
separation procedure for the latter LP as follows. Given a fractional vector $x$,
we can check if it is feasible for the LP by checking for every one of the polynomially many
failure scenarios $F_1, \cdots, F_m$, if it contains a subset $F$ of size $i$
that is both an $s$-$t$ cut in $\bar G = (V,X)$, and 
$$
\sum_{\{u,v\} \in \bar V\times \bar V \,\,\, \text{covers} \,\, F} x_{u,v} < 1.
$$
Let us call a set $F$ of the latter type \textbf{violating}.
This can be achieved as follows. Let $F_j$ be the scenario from the family of input
scenarios that we would like to test. Let $H = (\bar V,Y)$ be the graph obtained from $\bar G$
by adding the direct edge $\{u,v\}$ for every pair of distinct nodes $u,v \in \bar V$. The new
edge $\{u,v\}$ represents the link $L_{u,v}$.
Define an edge capacity vector $c: Y\rightarrow \mathbb{R}_{\geq 0}$ 
on the new edge set $Y$ setting $c_j(e) = 1$ if $e\in F_j$, $c_j(e) = \infty$ if $e\in X\setminus F_j$
and $c_j(e) = x_e$ if $e\in Y\setminus X$. It is now easy to verify that a violating 
set $F \subseteq F_j$ exists if and only if the capacity of the minimum $s$-$t$ cut in $H$ with 
capacity vector $c$ is strictly bellow $i+1$. Furthermore, if such a cut exists, the 
set $F$ can be chosen to be all edges of $F_j$ crossing the minimum cut. 
Polynomiality of the latter transformation and the minimum $s$-$t$ cut problem
now imply that the Ellipsoid algorithm can be used to solve the LP in polynomial time.


\subsubsection{Rounding the LP}

Let $x^*$ denote an optimal solution to our LP. We describe our rounding procedure
next.

Our rounding technique heavily exploits the planarity of the input graph $G$.
Let us henceforth assume that $G$ is presented with a planar embedding $\Gamma$. 
Such an embedding can be computed in polynomial time. 
We let $\psi^1,\cdots, \psi^{\bar q}$ denote the faces of the embedding of $\bar G$, induced
by the embedding of $G$.

\begin{definition}\label{def:planarproperties}

We say that link $L_{u,v}$ is of \textbf{type $j$} if it connects two nodes
$u, v$ on $\psi^j$, and if $L_{u,v}$ is completely contained in the face $\psi^j$.
We call a link \textbf{typed}
if it is of type $j$ for some $j\in [\bar q]$. Links that are not typed are called
\textbf{untyped}.
\end{definition}

For what follows it will be convenient to assume that $x^*$ is \textbf{clean}, i.e.
that $x^*_{u,v} = 0$ holds for every untyped link $L_{u,v}$. 
This assumption does not compromise
generality, as we state in the following lemma.

\begin{lemma}\label{lem:onlytyped}
 Restricting the solutions of the LP to satisfy $x_{u,v} = 0$ for every untyped link 
$L_{u,v}$ does not change the optimal value of the LP.
\end{lemma}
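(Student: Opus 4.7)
The plan is to show that any feasible LP solution $x^*$ can be transformed, without increasing cost, into a feasible solution $x'$ with $x'_{u,v}=0$ on every untyped link. This gives the inequality "restricted optimum $\le$ unrestricted optimum", and the reverse inequality is trivial, so the two LPs have equal optimal values.

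The key structural observation is that a link $L_{u,v}$ — a shortest $u$-$v$ path in $(V,E_i)$, whose edges lie in $E\setminus X$ — can only ``switch faces'' of the embedding of $\bar G$ at its internal vertices that lie in $\bar V$. Indeed, any vertex of $V\setminus \bar V$ is not incident to any edge of $X$, hence lies in the interior of a unique face $\psi^j$ of $\bar G$, and every edge of $E\setminus X$ incident to it is drawn inside that same face. Enumerate the $\bar V$-vertices along $L_{u,v}$ as $u=z_0,z_1,\ldots,z_p=v$, in the order they appear. Then each sub-path from $z_{i-1}$ to $z_i$ is entirely contained in a single face $\psi^{j_i}$, and its two endpoints lie on the boundary of $\psi^{j_i}$. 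Moreover, as a sub-path of a shortest path it is itself a shortest $z_{i-1}$-$z_i$ path, so we may take it as the link $L_{z_{i-1},z_i}$; by construction this link is of type $j_i$, hence typed.

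Given an optimal $x^*$, I define $x'$ as follows. Initialize $x':=x^*$, and for each untyped link $L_{u,v}$ with $x^*_{u,v}>0$ decompose it as above, set $x'_{u,v}:=0$, and for every $i\in [p]$ add $x^*_{u,v}$ to $x'_{z_{i-1},z_i}$. Cost is preserved because the sub-paths concatenate into $L_{u,v}$, giving $\sum_{i=1}^p \ell_{z_{i-1},z_i}=\ell_{u,v}$; so the contribution $x^*_{u,v}\ell_{u,v}$ is replaced by an equal sum. Feasibility is preserved by a crossing argument: for any $F\in \OM$ covered by $L_{u,v}$ the endpoints $u,v$ lie on opposite sides of the cut defined by $F$ in $\bar G$, so along the sequence $z_0,z_1,\ldots,z_p$ the side must change at some consecutive pair $z_{i-1},z_i$, making $L_{z_{i-1},z_i}$ a typed link that also covers $F$. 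Hence the covering weight on $F$ in $x'$ is at least that in $x^*$, which is at least $1$. Since each transfer removes weight only from the untyped link being processed and deposits it on typed links, after all transfers $x'_{u,v}=0$ holds for every untyped link, and $x'$ is feasible with $\ell(x')\le \ell(x^*)$.

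The only delicate point is the geometric claim in the second paragraph. Everything else is bookkeeping. The argument uses crucially that the embedding of $\bar G$ is the one induced by the given planar embedding of $G$, so that edges of $E\setminus X$ are already drawn inside the faces of $\bar G$; this is precisely what lets us localize each sub-path to a single face $\psi^{j_i}$ and conclude that the sub-links are typed.
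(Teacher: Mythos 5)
Your proposal is correct and follows essentially the same route as the paper: decompose each untyped link at its $\bar V$-vertices into typed sub-links (justified by the fact that vertices of $V\setminus\bar V$ lie strictly inside a single face), shift the weight onto those sub-links, and use shortest-path additivity for the cost and a cut-crossing argument for feasibility. The only cosmetic difference is that you redistribute all untyped weight in one pass, whereas the paper phrases the same exchange as a contradiction against an optimal solution minimizing the untyped weight; both are sound.
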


\begin{proof}
Assume that $x^*$ is an optimal solution to the LP with minimum possible 
weight assigned to untyped links
$$
\sum_{\{u,v\}\in \bar V\times\bar V \,\,\, \text{untyped}} x^*_{u,v}.
$$
Assume towards contradiction that $x^*_{u,v}>0$ holds for some untyped link $L_{u,v}$.
Since $L_{u,v}$ is untyped, it forms a shortest path between the nodes
$u$ and $v$, composed of edges contained in several faces of $\bar G$.
Let $u = v_1, \cdots, v_{p} = v$ be nodes on $L_{u,v}$ with the following properties.
\begin{itemize}
 \item The nodes appear in this order on $L_{u,v}$, when it is traversed from $u$ to $v$.
 \item For every $i\in [p-1]$, it holds that $L_{v_i, v_{i+1}}$ is a typed
link, i.e. it holds that $v_i, v_{i+1}\in \bar V$ and the sub-path of $L_{u,v}$ 
between $v_i$ and $v_{i+1}$ is completely contained in some face $\psi^{j_i}$.
\end{itemize}
Now, consider the LP solution $y$ where 
\begin{itemize}
 \item $y_{u,v} = 0$,
 \item $y_{v_i, v_{i+1}} = \min \{1, x^*_{v_i, v_{i+1}} + x^*_{u,v}\}$ for every $i\in [p-1]$, and
 \item $y_{w,z} = x^*_{w,z}$ everywhere else.
\end{itemize}

Since all links are shortest paths we have $w(L_{u,v}) = \sum_{i\in [p-1]} w(L_{v_i, v_{i+1}})$,
and thus $\ell_{u,v} = \sum_{i\in [p-1]} \ell_{v_i, v_{i+1}}$. This implies that $\ell(y)\leq \ell(x^*)$.

The new solution is also a feasible LP solution. To see this we only need to verify
that for every $F\in \OM$, the constraint 
$$
\sum\limits_{\{z,w\}\in cover(F)} y_{z,w} \geq 1
$$
holds. If $L_{u,v}$ does not cover $F$, this is obvious from feasibility of $x^*$, 
since $y_{z,w} \geq x^*_{z,w}$ for all links except $L_{u,v}$. 

In the remaining case $L_{u,v}$ covers $F$. Now, since the union of links 
$\cup_{i\in [p-1]} L_{v_i, v_{i+1}}$ contains a $u$-$v$ path, clearly
at least one of these links, say $L_{v_{i^*}, v_{i^*+1}}$, also covers $F$.
If $y_{v_{i^*}, v_{i^*+1}} = 1$ we are clearly done. In the other case
$$
y_{v_{i^*}, v_{i^*+1}} = x^*_{v_{i^*}, v_{i^*+1}} + x^*_{u,v},$$
and thus what is lost by reducing $x^*_{u,v}$ is compensated by increasing 
$x^*_{v_{i^*}, v_{i^*+1}}$, and the constraint is also satisfied.

Finally, we obtained a new optimal solution $y$ with a lower weight assigned
to untyped links, as all the links of the form $L_{v_i, v_{i+1}}$ are typed links,
and the link $L_{u,v}$ is untyped. 
This contradicts the choice of $x^*$. 
\end{proof}


A set of links $S$ is \textbf{clean} if it only contains typed links.
The following lemma proves certain useful connections between the planar embedding
of $G$ and the link covering problem, which we later use to round the LP solution.
We say that an edge is \textbf{on the boundary of a face} if both of its endpoint lie 
on the face.

\begin{lemma}\label{lem:coverproperties}
 Let $F\in \OM$ be some failure scenario and let $\psi \in \{\psi^1, \cdots, \psi^{\bar q}\}$
be some face. Then, if $i\geq 2$, the number of edges of $F$ that lie on the boundary of $\psi$ is 
either zero or two. Furthermore, the number of faces $\{\psi^1, \cdots, \psi^{\bar q}\}$ that
contain two edges of $F$ on their boundary is exactly $i$.
\end{lemma}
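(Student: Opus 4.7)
My plan is to reduce the lemma to a clean application of planar duality, after first observing that every relevant failure scenario $F$ is an inclusion-minimal $s$-$t$ edge cut in $\bar G = (V, X_{i-1})$.

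Step one: I will show that any relevant $F\in\OM_i$ is a minimal $s$-$t$ cut of size exactly $i$. Since $X_{i-1}$ is feasible for $\mathrm{P}_{i-1}$, no element of $\OM_{i-1}$ disconnects $s$ from $t$ in $\bar G$. Because $F$ does disconnect $s$-$t$ and $|F|\leq i$, this forces $|F|=i$. Moreover every proper subset $F'\subsetneq F$ satisfies $F'\subseteq F_j$ and $|F'|\leq i-1$, so $F'\in\OM_{i-1}$ and cannot disconnect $s$-$t$ in $\bar G$. Hence $F$ is inclusion-minimal. A routine argument then shows that such an $F$ equals $\delta(S)$ for $S$ the component of $s$ in $\bar G\setminus F$, with both $\bar G[S]$ and $\bar G[V\setminus S]$ connected: an edge of $F$ interior to either side could be deleted without reconnecting $s$ to $t$ (violating minimality), and if $\bar G[V\setminus S]$ split into several components, taking the one containing $t$ would yield a strictly smaller $s$-$t$ disconnecting cut.

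Step two: I will apply planar duality in the induced embedding of the $s$-$t$ component of $\bar G$. A standard fact of planar duality states that the edges of a minimal cut $\delta(S)$ with both sides inducing connected subgraphs correspond precisely to the edges of a \emph{simple} cycle $C^*$ of length $|F|=i$ in the planar dual. The vertices of this dual graph are the faces $\psi^1,\ldots,\psi^{\bar q}$, and the number of edges of $F$ on the boundary of a face $\psi^j$ equals the degree of $\psi^j$ in $C^*$. Since every vertex of a simple cycle has degree $0$ or $2$, the first part of the lemma follows. Faces belonging to other connected components of $\bar G$, if any, trivially contain no edge of $F$ on their boundary.

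Step three: I will close the second part with a short double count. Each of the $i$ edges of $F$ lies on the boundary of exactly two faces of $\bar G$, so $\sum_{j=1}^{\bar q}\bigl|\{e\in F : e\text{ on boundary of }\psi^j\}\bigr|=2i$, and combined with the $0$-or-$2$ dichotomy this forces exactly $i$ faces to carry two edges of $F$ on their boundary.

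The main obstacle I expect is the minimality step — recognising that the augmentation framework implicitly restricts us to inclusion-minimal cuts, which is what guarantees that the dual object is a \emph{simple} cycle rather than a more general even subgraph, for which the 0-or-2 property could easily fail. A mild technical point is that $\bar G$ need not be globally connected, but this is harmless since $F$ interacts only with the $s$-$t$ component and the remaining faces contribute zero to all counts. The hypothesis $i\geq 2$ enters through minimality as well: for $i=1$ a cut edge would be a bridge whose two incident faces coincide, breaking the $0$-or-$2$ statement, and exactly this degenerate case is excluded by $|F|\geq 2$.
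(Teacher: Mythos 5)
Your proof is correct, and it reaches the conclusion by a genuinely different middle step than the paper's. The skeleton coincides: both arguments first use feasibility of $X_{i-1}$ for $\mathrm{P}_{i-1}$ to conclude that every relevant $F$ is an inclusion-minimal $s$-$t$ cut of size exactly $i$, so that $\bar G\setminus F$ has exactly two relevant components $C^s(F)$ and $C^t(F)$, and both close the second assertion with the same double count $\sum_j \left|\{e\in F : e \text{ on the boundary of } \psi^j\}\right| = 2i$. Where you diverge is the zero-or-two property. The paper proves it by hand: a parity argument along the facial cycle shows the count is even, and four or more boundary edges of $F$ are ruled out by exhibiting a path in $C^s(F)$ and a path in $C^t(F)$ attached to alternating boundary vertices, which would have to cross in the plane. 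You instead sharpen the minimality statement to ``$F$ is a bond'' (both sides connected) and invoke the standard bond--cycle duality for plane graphs, so that $F$ becomes a simple cycle in the dual and the facial counts are its vertex degrees. This is cleaner and makes the structure explicit --- it would even give the second assertion for free, since a simple cycle with $i$ edges meets exactly $i$ dual vertices --- but it outsources precisely the topological content that the paper's crossing argument supplies, since the proof of the duality theorem is essentially that argument. Your explicit handling of the bridge/loop degeneracy (which is why $i\geq 2$ is needed) and of possible disconnectedness of $\bar G$ is, if anything, more careful than the paper's, which dismisses cut edges in one sentence at the end of its proof.
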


\begin{proof}
 Since $F\in \OM$ we know that $F$ is an $s$-$t$ cut in $\bar G$. Observe that
$(\bar V, X\setminus F)$ contains exactly two connected components, one $C^s(F)$ containing $s$
and one $C^t(F)$ containing $t$. This holds since, by definition of $\OM$ and the
augmentation problem, the set $X$ is feasible for $\mathrm{P}_{i-1}$, and thus any subset of $F$ 
is \textbf{not} an $s$-$t$ cut in $\bar G$.

This implies that all edges in $F$ can be directed unambiguously from the node in $C^s(F)$
to the node in $C^t(F)$. Now consider any edge $e\in F$ and  any face 
$\psi \in \{\psi^1, \cdots, \psi^{\bar q}\}$ which contains $e$ on its boundary.
Since $\psi$ corresponds to a cycle in $\bar G$, the number of edges of $F$ on its 
boundary cannot be odd, as an odd number of such edges would imply the existence
of path in $\bar G$ connecting $C^s(F)$ to $C^t(F)$, and containing no edge of $F$.

Next we prove that this number must be two, i.e. that $\psi$ contains exactly one
more edge of $F$. Assume towards contradiction that there are at least four such edges.
By traversing the cycle in $\bar G$, forming the face $\psi$, the cut defined by $F$ 
is crossed every time an edge of $F$ is crossed. In particular, there are 
some four nodes $u_1, v_1, u_2, v_2$ appearing in this order on the face, and such
that $u_1, u_2$ belong to $C^s(F)$ and $v_1, v_2$ belong to $C^t(F)$. Let $Q$ and $R$ be
a $u_1$-$u_2$ path in $C^s(F)$ and a $v_1$-$v_2$ path in $C^t(F)$, respectively. 
Since $\psi$ is a face, the embedding of both $Q$ and $R$ is disjoint from the
interior of $\psi$. Now, since $Q$ and $R$ form continuous curves in the plane, and
are connected to alternating nodes on the boundary of a face, they
must intersect at some point, contradicting the fact that $C^s(F)$ and $C^t(F)$ are
different connected components in $(\bar V, X\setminus F)$. Figure~\ref{fig:lem3}
illustrates this argument.

Finally, since every edge $e\in F$ belongs to the boundary of exactly two 
faces in $\{\psi^1, \cdots, \psi^{\bar q}\}$, and since every face containing
some edge of $F$ on the boundary contains exactly two such edges, we 
conclude that there are exactly $|F| = i$ faces containing some edge of $F$
on the boundary. In the first assertion we assumed there are no cut edges
in $\bar G$. For $i\geq 2$ this can be assume without loss of generality, as
cut edges are either contracted in the pre-processing stage before the augmentation
step, or, they are redundant, and can be removed from $X$.

\end{proof}

\begin{figure}
\begin{center}
\begin{tikzpicture}[scale=1.4]
\draw [black, very thick, fill=blue!10] plot [smooth cycle] coordinates {(0,0) (-0.3,0.05) (-0.5,0.2) (-0.6, 0.5) (-0.5,0.9) (-0.3,1.1) 
(0.6,1.3) (1,1.2) (1.2,1.0) (1.4, 0.5) (1.1,0.1) (0.9,0)};

\node (a1) at (0.4,0.6) [circle,draw=blue!10,fill=blue!10,thick,inner sep=1pt,minimum size=1mm] {\Large $\psi$};

\draw [ultra thick,red] (-0.3,0.05) to[out=150,in=-60] (-0.5,0.2);
\node (a1) at (-0.3,0.05) [circle,draw=black!100,fill=black!100,thick,inner sep=1pt,minimum size=1mm] {};
\node (a2) at (-0.5,0.2) [circle,draw=black!100,fill=black!100,thick,inner sep=1pt,minimum size=1mm] {};

\draw [ultra thick,red] (-0.5,0.9) to[out=70,in=200] (-0.3,1.1); 
\node (a1) at (-0.5,0.9) [circle,draw=black!100,fill=black!100,thick,inner sep=1pt,minimum size=1mm] {};
\node (a2) at (-0.3,1.1) [circle,draw=black!100,fill=black!100,thick,inner sep=1pt,minimum size=1mm] {};

\draw [ultra thick,red] (1,1.2) to[out=-30,in=120] (1.2,1.0);
\node (a1) at (1,1.2) [circle,draw=black!100,fill=black!100,thick,inner sep=1pt,minimum size=1mm] {};
\node (a2) at (1.2,1.0) [circle,draw=black!100,fill=black!100,thick,inner sep=1pt,minimum size=1mm] {};

\draw [ultra thick,red] (1.1,0.1) to[out=230,in=0] (0.9,0);
\node (a1) at (1.1,0.1) [circle,draw=black!100,fill=black!100,thick,inner sep=1pt,minimum size=1mm] {};
\node (a2) at (0.9,0) [circle,draw=black!100,fill=black!100,thick,inner sep=1pt,minimum size=1mm] {};


\node (b1) at (0,0) [circle,draw=black!100,fill=black!100,thick,inner sep=1pt,minimum size=1mm] {};
\node (b2) at (-0.6,0.5) [circle,draw=black!100,fill=black!100,thick,inner sep=1pt,minimum size=1mm] {};
\node (b3) at (0.6,1.3) [circle,draw=black!100,fill=black!100,thick,inner sep=1pt,minimum size=1mm] {};
\node (b4) at (1.4,0.5) [circle,draw=black!100,fill=black!100,thick,inner sep=1pt,minimum size=1mm] {};

\node (a1) at (-1.1,0) [circle,draw=blue!0,fill=blue!0,thick,inner sep=1pt,minimum size=1mm] {\large $Q$};
\node (a1) at (-1.45, 1.3) [circle,draw=blue!0,fill=blue!0,thick,inner sep=1pt,minimum size=1mm] {\large $R$};

\draw [black, thick] plot [smooth] coordinates {(0,0) (-0.7,-0.2) (-0.9,0.4) (-0.9, 1.1) (-0.1,1.8) (0.6,1.3)};
\draw [black, thick] plot [smooth] coordinates {(-0.6,0.5) (-1.3, 0.7) (-0.9, 1.7) (0.7, 1.9) (1.8, 0.8) (1.4,0.5)};

\node (b4) at (-0.92,0.52) [circle,draw=blue!100,fill=blue!100,thick,inner sep=1pt,minimum size=2mm] {};
\end{tikzpicture}
\end{center}
\caption{The situation in the proof of Lemma~\ref{lem:coverproperties}.}\label{fig:lem3}
\end{figure}
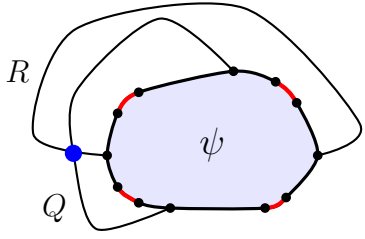

For simplicity we say that a \textbf{scenario $F\in \OM$ is contained in a 
face $\psi \in \{\psi^1,\cdots, \psi^{\bar q}\}$},
if two edges of $F$ lie on the boundary of $\psi$.
Lemma~\ref{lem:coverproperties} implies that a clean set $S$ of links is 
feasible if and only if for every $F\in \OM$, there exists a face $\psi^j$
containing $F$, and a link $L_{u,v} \in S$ of
type $j$ with $u$ and $v$ on different sides of the cut defined by $F$.
With this criterion we are ready to prove the main lemma of this section. 

\begin{lemma}\label{lem:rounding}
 Let $x$ be a clean feasible solution to the LP. Then, there exists a feasible set $S$ 
of links with total cost at most $8i\ell(x)$.
\end{lemma}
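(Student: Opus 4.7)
The plan is to decompose the global LP face-by-face using Lemma~\ref{lem:coverproperties}, then round each face-level subproblem independently by appealing to a constant-factor approximation for the crossing-cover problem on chords of a cycle.

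First, I would observe that in a clean solution, a typed link of type $j$ can cover a scenario $F$ only when $F$ is contained in the face $\psi^j$. Indeed, if no edge of $F$ lies on the boundary of $\psi^j$, then walking along that boundary never crosses the cut $F$, so all nodes on $\psi^j$ lie on the same side of $F$ and no type-$j$ link can have endpoints separated by $F$. Consequently, the LP constraint for $F$ splits into a sum over the faces containing $F$, and by Lemma~\ref{lem:coverproperties} this outer sum has exactly $i$ terms:
\[
\sum_{j:\ \psi^j \text{ contains } F}\ \ \sum_{\substack{L_{u,v}\text{ of type }j\\ L_{u,v}\in cover(F)}} x_{u,v} \ \geq\ 1.
\]
Averaging, at least one face $\psi^{j(F)}$ containing $F$ has inner sum $\geq 1/i$. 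I would assign every $F$ to such a face $j(F)$.

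Next, I would restrict to one face at a time. For each $j$ let $\OM^j=\{F\in \OM : j(F)=j\}$, and let $x^j$ be the restriction of $x$ to links of type $j$. Then $i\cdot x^j$ is a feasible fractional solution to the \textbf{per-face subproblem}: cover all $F\in \OM^j$ using only type-$j$ links. Because $x$ is clean and each typed link has a unique type, these subproblem costs add up nicely: $\sum_j \ell(x^j)=\ell(x)$.

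The hard step is rounding one face. The boundary of $\psi^j$ is a cycle in $\bar G$; each type-$j$ link is a chord of this cycle lying inside $\psi^j$; and each $F\in \OM^j$ is a pair of boundary edges splitting the cycle into two arcs, covered precisely by those chords whose endpoints lie in different arcs. Viewing the cycle as a circle and chords and scenarios as pairs of points on it, this is a crossing-cover problem, which the paper reduces to the dominating set problem in circle graphs. The plan is to invoke the paper's LP-respecting constant-factor approximation for that problem and apply it to $i\cdot x^j$, producing an integral set $S^j$ of type-$j$ links with $w(S^j)\le 8\,i\,\ell(x^j)$, where the constant $8$ absorbs the circle-graph rounding ratio.

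Finally, taking $S=\bigcup_j S^j$, feasibility is immediate since every $F\in \OM$ is covered inside $\psi^{j(F)}$ by $S^{j(F)}$, and the cost bound telescopes:
\[
w(S)\ \leq\ \sum_j w(S^j)\ \leq\ 8i\sum_j \ell(x^j)\ =\ 8i\,\ell(x).
\]
The main obstacle is the single-face rounding step: one must verify that the crossing-cover problem on chords of a cycle admits a constant-factor approximation that respects the LP relaxation, so that the per-face losses compose without an extra logarithmic factor. Everything else---the face decomposition, the averaging, and the final assembly---is routine once Lemmas~\ref{lem:onlytyped} and~\ref{lem:coverproperties} are in hand.
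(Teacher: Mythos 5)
Your decomposition matches the paper's almost exactly: you use Lemma~\ref{lem:coverproperties} to split each scenario's LP constraint over the exactly $i$ faces containing it, assign each $F$ to a face where it receives fractional coverage at least $1/i$, scale the per-face restriction $x^{(j)}$ by $i$, and observe that the costs $\ell(x^{(j)})$ sum to $\ell(x)$. Your preliminary observation that a type-$j$ link can only cover scenarios contained in $\psi^j$ is a correct (and slightly more explicit) justification of the decomposition the paper asserts from cleanness. Up to this point the argument is sound and identical in structure to the paper's.

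However, there is a genuine gap at precisely the step you yourself flag as ``the main obstacle'': you invoke an LP-respecting constant-factor rounding for the per-face crossing-cover problem (equivalently, the restricted dominating set problem in circle graphs) without proving that one exists, and without justifying the specific constant $8$ that appears in the lemma's statement. This is where the paper's actual technical content lies. The paper proves it (Claim~1) by mapping each demand chord $(p_l,p_r)$ to the point $(p_l,p_r)$ in a square $\mathrm{R}$, and each covering chord $\beta=(p_l,p_r)$ to the pair of axis-aligned rectangles $L[\beta]=[p_0,p_l]\times[p_l,p_r]$ and $T[\beta]=[p_l,p_r]\times[p_r,p_{m-1}]$, anchored respectively at the left and top sides of $\mathrm{R}$; domination of $\alpha$ by $\beta$ becomes containment of the point in $L[\beta]\cup T[\beta]$. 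Splitting the demand points into those receiving fractional weight at least $1/2$ from $L$-rectangles and the rest (a factor of $2$), each half becomes an anchored rectangle cover instance for which Bansal and Pruhs give an LP-respecting $2$-approximation, yielding $4+4=8$. Without this argument (or some substitute), your proof does not establish the claimed bound $8i\,\ell(x)$ --- the standard set-cover LP for crossing covers of chords could a priori have a logarithmic integrality gap, which would contaminate the whole bound.
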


\begin{proof}
 We construct the desired set of links in two steps. First, we partition
the set of scenarios $\OM$ into $\bar q$ parts, one for each face of $\bar G$.
In the second stage, we process the faces of $\bar G$ one by one, and for each face
we use the part of the LP solution $x$ corresponding to the face to construct
a set of links that cover the scenarios assigned to that face.

Consider any scenario $F\in \OM$. Since $x$ is feasible we have 
$$
\sum\limits_{\{u,v\}\in cover(F)} x_{u,v} \geq 1.
$$
Let $\psi^{p_1}, \cdots, \psi^{p_i} \in \{\psi^1, \cdots, \psi^{\bar q}\}$ be
the set of faces that contain $F$. According
to Lemma~\ref{lem:coverproperties}, there are exactly $i$ such faces.
Now, since $x$ is clean, the latter sum 
can be decomposed as follows.
$$
\sum\limits_{\{u,v\}\in cover(F)} x_{u,v} = \sum_{j=1}^i \,\,
\sum\limits_{\substack{\{u,v\}\in cover(F) \\ L_{u,v} \,\, \text{type} \,\, p_j}} x_{u,v}
$$
Let us denote the second sum on the right hand side by $\sigma^j[F]$, i.e. let
$$
\sigma^j[F] = \sum\limits_{\substack{\{u,v\}\in cover(F) \\ L_{u,v} \,\, \text{type} \,\, p_j}} x_{u,v}.
$$
Now since $\sum_{j=1}^i \sigma^j[F] \geq 1$, there exists at least one index $j\in [i]$ such that
$\sigma^j[F] \geq \frac{1}{i}$. We let $j[F]\in [i]$ be one such index. If several indices $j\in [i]$
satisfy the latter condition, one is chosen arbitrarily. Note that the index $j[F]$ is chosen
is such a way that in the LP solution $x$, the total weight of links of type $p^{j[F]}$ that 
cover $F$ is at least $\frac{1}{i}$.

We are now ready to define the partition of $\OM$ into $\bar q$ parts, corresponding to the
$\bar q$ faces of $\bar G$. For $j\in [\bar q]$ we let
$$
\OM^{(j)} = \left\{ F\in \OM \,\mid\, p^{j[F]} = j\right\}.
$$
Clearly, $\OM = \cup_{j\in [\bar q]} \OM^{(j)}$ is a partition of $\OM$. To conclude the first
stage of the our procedure, it remains to define a corresponding decomposition 
$
x = \sum_{j\in [\bar q]} x^{(j)}
$
of the LP solution $x$. The vector $x^{(j)}$ is defined by setting $x^{(j)}_{u,v} = x_{u,v}$
if $L_{u,v}$ is of type $j$, and $x^{(j)}_{u,v} = 0$ otherwise. This concludes the first
step of the rounding procedure.

In the second step we construct for every $j\in [\bar q]$, a set of links $S^{(j)}$
of total cost $8i\ell(x^{(j)})$ that covers all scenarios in $\OM^{(j)}$. By doing so
we clearly conclude the proof of the lemma, since by taking $\cup_{j\in [\bar q]} S^{(j)}$
we obtain a feasible solution with total cost of at most
$\sum_{j\in [\bar q]} 8 i \ell(x^{(j)}) = 8 i \ell(x)$, as desired.

It remains to show how a single set $S^{(j)}$ can be constructed. Our plan
is the following. First, we observe that, by construction, $i x^{(j)}$ is an 
LP solution that fractionally covers all scenarios in $\OM^{(j)}$. Then, 
we observe that the link covering problem restricted to links of type $j$,
and to scenarios in $\OM^{(j)}$ essentially becomes a variant of the \textbf{dominating set
problem on circle graphs.} We explain the required transformation next,
and conclude by proving that the integrality gap of the standard LP
relaxation for the latter problem is constant, and that the corresponding
integral solution can be found in polynomial time.



Recall that a \textbf{circle graph} is an intersection graph of the
set of chords in a circle. The dominating set problem in circle graphs hence
corresponds to finding a minimum-cost collection of chords that 
intersect every chord of the graph. We are interested in a variant of this
problem, where chords are partitioned into two groups called \textbf{demand chords}
and \textbf{covering chords}, and the goal is to find a minimum-cost set of covering 
chords that dominates all the demand chords. We call this problem the \textbf{restricted 
dominated set problem in circle graphs}.

The link covering problem restricted to
a the face $\psi$ can now be seen as a dominating set problem on circle graphs 
as follows. Let $v_0, v_1, \cdots, v_d = v_0$ be the nodes on the boundary
of $\psi$. We subdivide each edge $\{v_j, v_{j+1}\}$ for $j\in [d]$ by adding
the node $w_j$. This new cycle corresponds to the circle of the circle graph
we construct. Let us define the chords of the graph, and their corresponding weights,
next. For every scenario $F$ contained in $\psi$ we add the demand chord $\alpha_F$ connecting $w_{j_1}$
to $w_{j_2}$, where $\{u_{j_1}, u_{j_1+1}\}\in F$ and $\{u_{j_2}, u_{j_2+1}\} \in F$
(recall from Lemma~\ref{lem:coverproperties} that there are exactly two such edges).
Next, for every link of the form $L_{v_l, v_r}$, we add the covering chord 
$\beta_{v_l, v_r}$ connecting $v_l$ with $v_j$. The cost of this chord is set to 
$\ell_{v_l, v_r}$, i.e. we set $c(\beta_{v_l, v_r}) = \ell_{v_l, v_r}$. 
This concludes the transformation.

To see that the latter problem indeed models the desired link covering
problem it suffices to make the following simple observation. Sets of
chords corresponding to links that form a restricted dominating set in the circle
graph are in one-to-one correspondence with sets of links that cover all
scenarios, with identical costs. This is true, since a link 
$L_{v_l, v_r}$ covers a scenario $F$ if and only if the chords $\alpha_{F}$
and $\beta_{v_l, v_r}$ intersect.

We can now naturally interpret the solution $y^{(j)} = ix^{(j)}$ as a feasible 
fractional solution to the standard LP relaxation of the restricted dominating set
problem on the obtained circle graph. In the following claim we show that
the integrality gap of the latter LP is constant.
The proof of the claim uses a connection
to a special case of the \textbf{axes-parallel rectangle covering problem}, 
for which Bansal and Pruhs~\cite{bansal2010geometry} provided an LP-respecting
$2$-approximation with the natural LP.
This concludes the proof of the lemma.

\vspace{2mm}
\noindent\textit{Claim 1.}
 The integrality gap of the standard LP relaxation of the restricted 
dominating set problem on circle graphs is bounded by $8$.

\begin{proof}
Let $H = (V^d\cup V^c, E)$ be the given circle graph with $V^d$ and
$V^c$ corresponding to the demand chords and the covering chords, respectively.
Let $g: V^c \rightarrow \mathbb{Z}_{\geq 0}$ denote the cost function for the
covering chords. Let $p_0, \cdots, p_m = p_0$ be all the points on the circle to which 
chords are connected, in the order that they appear when the circle is traversed in some
arbitrary direction. For a chord $\alpha\in V^d\cup V^c$ we write $\alpha = (p_l,p_r)$ 
with $l \leq r$ to indicate the endpoints of the chord in the circle. 

We interpret the restricted dominating set problem as a kind of 
\textbf{point covering problem by axis-aligned rectangles} as follows. 
Construct a large square $\mathrm{R}$ with side length $m$. The points 
in $\mathrm{R}$ are indexed by pairs of points on the circle, with $(p_0, p_0)$
and $(p_{m-1}, p_{m-1})$ being, respectively, the lower-left corner of
$\mathrm{R}$ and the upper-right corner of $\mathrm{R}$. For four
points $p_{l_1}, p_{l_2}, p_{r_1}, p_{r_2}$ with $l_1\leq l_2$ and $r_1 \leq r_2$
we denote by $[p_{l_1}, p_{r_1}]\times[p_{l_2}, p_{r_2}] \subseteq \mathrm{R}$ 
the rectangle contained in $\mathrm{R}$ with lower-left point and upper-right
point $(p_{l_1}, p_{r_1})$ and $(p_{l_2}, p_{r_2})$, respectively.

Demand chords are interpreted as \textbf{points in $\mathrm{R}$}. 
The chord $\alpha = (p_l,p_r) \in V^d$ is interpreted as the point 
$Q[\alpha] = (p_l,p_r)$ inside $\mathrm{R}$. Observe that $Q[\alpha]$
is contained above the main diagonal in $\mathrm{R}$, that is the line 
connecting $(p_0, p_0)$ and $(p_{m-1}, p_{m-1})$, as $p_l\leq p_r$.

Covering chords are interpreted as \textbf{pairs of rectangles} contained in $\mathrm{R}$. 
The chord $\beta = (p_l,p_r) \in V^d$ is interpreted as the pair of rectangles
$$
L[\beta] = [p_0, p_l]\times[p_l,p_r] \,\,\, \text{and} \,\,\, 
T[\beta] = [p_l, p_r]\times[p_r,p_{m-1}].
$$
Observe the following property. $L[\beta]$ intersects the left side of $\mathrm{R}$ and 
$T[\beta]$ intersects the top side of $\mathrm{R}$.

It is now straightforward to verify that a covering chord $\beta$
dominates a demand chord $\alpha$ if and only if
$$
Q[\alpha] \in L[\beta] \cup T[\beta].
$$
Finally, we arrived at the desired covering problem, namely 
the problem of selecting a minimum cost set of rectangles pairs 
$L[\beta] \cup T[\beta]$ in $\mathrm{R}$, corresponding to covering chords, 
so as to cover every point $Q[\alpha]$, corresponding to demand chords.
The cost of a rectangle pair is simply the cost of the corresponding
covering chord. Figure~\ref{fig:reduction} illustrates the transformation.

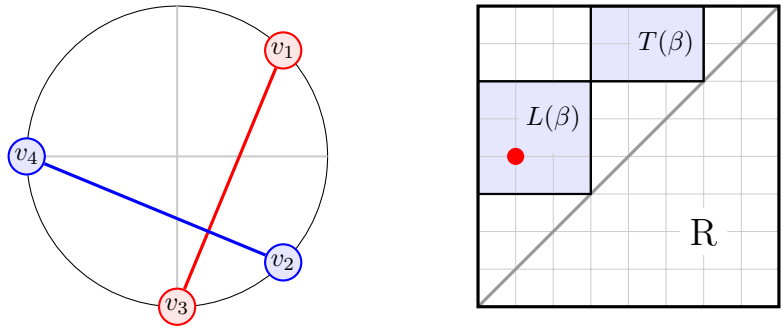
\begin{figure}
\begin{center}
 \begin{tikzpicture}
  \draw (0,0) circle (2cm);
  \draw [draw=black!20, thick] (2,0) -- (-2,0);
  \draw [draw=black!20, thick] (0,2) -- (0,-2);

  \node (a1) at (1.41, 1.41) [circle,draw=red!100,fill=red!10,thick,inner sep=1pt,minimum size=1mm] {\small $v_1$};
  \node (a2) at (1.41, -1.41) [circle,draw=blue!100,fill=blue!10,thick,inner sep=1pt,minimum size=1mm] {\small $v_2$};
  \node (a3) at (0, -2) [circle,draw=red!100,fill=red!10,thick,inner sep=1pt,minimum size=1mm] {\small $v_3$};
  \node (a4) at (-2, 0) [circle,draw=blue!100,fill=blue!10,thick,inner sep=1pt,minimum size=1mm] {\small $v_4$};
  
  \draw [-,red,very thick] (a1) to node [black] {} (a3);
  \draw [-,blue,very thick] (a2) to node [black] {} (a4);

  \begin{scope}[xshift=6cm]
   
   \filldraw [black, thick,fill=blue!10] (-2,-0.5) -- (-2,1) -- (-0.5,1) -- (-0.5,-0.5) -- cycle;
   \filldraw [black, thick,fill=blue!10] (-0.5,1) -- (-0.5,2) -- (1,2) -- (1,1) -- cycle;

    \draw [step=0.5cm,black!20,very thin] (-2,-2) grid (2,2);
   \draw [black!40, very thick] (-2,-2) -- (2,2);
   \draw [black, very thick] (-2,-2) -- (-2,2) -- (2,2) -- (2,-2) -- cycle;

    \draw [black, thick] (-2,-0.5) -- (-2,1) -- (-0.5,1) -- (-0.5,-0.5) -- cycle;
   \draw [black, thick] (-0.5,1) -- (-0.5,2) -- (1,2) -- (1,1) -- cycle;

    \node (p) at (-1.5, 0) [circle,draw=red!100,fill=red!100,thick,inner sep=1pt,minimum size=2mm] {};

    \node (p) at (-1, 0.5) [circle,draw=blue!10,fill=blue!10,thick,inner sep=1pt,minimum size=2mm] {\small $L(\beta)$};
    \node (p) at (0.5, 1.5) [circle,draw=blue!10,fill=blue!10,thick,inner sep=1pt,minimum size=2mm] {\small $T(\beta)$};
    \node (p) at (1,-1) [circle,draw=red!0,fill=red!0,thick,inner sep=1pt,minimum size=2mm] {\Large $\mathrm{R}$};
  \end{scope}

 \end{tikzpicture}
\end{center}
 \caption{An illustration of the transformation. The chords $\alpha = (v_1, v_3)$ and $\beta = (v_2, v_4)$ are
demand and covering chords, respectively. The ordering of the points on the circle is clockwise
starting from the highest point.}\label{fig:reduction}
\end{figure}

The standard LP relaxation for this covering problem reads
\begin{equation*}
\min \left\{ g(z) \,\mid\, z_\beta\ \geq 0 \;\,\, \forall \beta \in V^c, \quad 
\sum\limits_{\beta \,:\, Q[\alpha] \in L[\beta] \cup T[\beta]} z_\beta \geq 1 
\quad \forall \alpha\in V^d \right\}.
\end{equation*}
Let $z$ be a fractional feasible solution to the latter LP. We construct
an integral solution as follows. First, observe that for every demand
chord $\alpha \in V^d$, at least one of the following holds due to
feasibility of $z$.
\begin{itemize}
 \item $\sum_{\beta \,:\, Q[\alpha] \in L[\beta]} z_\beta \geq \frac{1}{2}$
 \item $\sum_{\beta \,:\, Q[\alpha] \in T[\beta]} z_\beta \geq \frac{1}{2}$
\end{itemize}
Let $V^d_L \subseteq V^d$ be the set of all $\alpha \in V^d$ for which
the first condition holds. Let $V^d_T = V^d \setminus V^d_L$ be all other demand
chords. We show how to construct an integral solution of cost at most $4g(z)$ 
that dominates all chords in $V^d_L$. From symmetry, this implies that 
another integral solution can be constructed for $V^d_T$ with cost at most $4g(z)$.
This will then prove the claim, as the union of both solutions is
an integral feasible solution of cost at most $8g(z)$.

To this end observe that $2z$ is a fractional feasible solution to the LP
\begin{equation*}
\min \left\{ g(z) \,\mid\, z_\beta\ \geq 0 \;\,\, \forall \beta \in V^c, \quad 
\sum\limits_{\beta \,:\, Q[\alpha] \in L[\beta]} z_\beta \geq 1 
\quad \forall \alpha\in V^d_L \right\}.
\end{equation*}
Now, it remains to observe that the latter LP is the natural LP relaxation
of an ordinary rectangle covering problem. The rectangles 
$\{L[\beta]\,\mid\, \beta\in V^c\}$ also have the additional property that their
left side lies on the left side of $\mathrm{R}$. This restricted variant 
of the rectangle covering problem was studied by Bansal and Pruhs~\cite{bansal2010geometry},
who proved that the standard LP relaxation of the problem has integrality gap 
of $2$. This implies that there exists an integral solution covering $V^d_L$ 
with cost $4g(z)$. This solution can also be constructed in polynomial time.
This concludes the proof of the claim. 
\end{proof}

\end{proof}

\subsubsection{Putting it all together}

We are ready to prove Theorem~\ref{thm:planar}.

\begin{proof}[Theorem~\ref{thm:planar}]
 The feasibility of the solution obtained after the final augmentation step is
obvious. It remains to compute the approximation guarantee. 
Let $\mathrm{ALG}$ denote the cost of the solution returned by the algorithm. 
Clearly, $\mathrm{AUG}_i\leq 2\mathrm{OPT}$ holds for every $i\in [k]$, as any optimal
solution is feasible for any augmentation problem, and Lemma~\ref{lem:paths} asserts that 
by using unions of paths we lose a factor of at most $2$.
According to Lemma~\ref{lem:rounding},
an $8i$-approximation can be obtained for the $i$-th augmentation problem 
in polynomial time. 
Also, the shortest path comprising the solution of $\mathrm{P}_0$ has cost of at most $\mathrm{OPT}$.
In total, we obtain the bound
$
\mathrm{ALG} \leq \mathrm{OPT} + \sum_{i=0}^k 8i \cdot 2 \mathrm{OPT} = O(k^2) \mathrm{OPT}. 
$

\end{proof}

\section{Bulk-robust spanning trees and further extensions}\label{sec:extensions}

\subsection{Bulk-robust spanning trees}
Let us discuss first the minor changes needed to prove Theorem~\ref{thm:planar}
for the bulk-robust spanning tree problem. 
As the changes
are minor, we choose to follow the outline of the proof given in the main text, and
describe the required modifications.

\subsubsection{The augmentation framework}

We use the same sets $\Omega_i$, $i\in [k]_0$ to define the relaxations
of the problem. The $i$-th augmentation problem $\mathrm{P}_i$ is to augment 
the set $X_{i-1}$ of edges chosen so far to a set $X_i$ with the property that
$(V, X_i\setminus F)$ is a connected graph for all $F\in \Omega_i$. 

As for the bulk-robust $s$-$t$ connection problem, the optimal solution to
any augmentation problem is a forest.
As a consequence, Lemma~\ref{lem:paths} still applies, so we can again use
unions of paths to approximate the augmentation problem, at the loss of a 
factor $2$.

The notion of links and covering by links is defined as before, except that
now cuts formed by sets $F\in \Omega_i$ are arbitrary cuts in the graph, and
not just $s$-$t$ cuts.

\subsubsection{Solving the link covering problem}

The approximate solution to the link covering problems are obtained
in essentially the same way for the bulk-robust spanning tree problem,
as for the bulk-robust $s$-$t$ connection problem. The differences are
minor and are explained next. 

The solution for $\mathrm{P}_{0}$ is computed by computing a minimum spanning
tree in the input graph in polynomial time. The cost of this tree
is clearly at most $\mathrm{OPT}$.

As we did before, we distinguish the case $i=1$ from the case $i\geq 2$.
The case $i=1$ is treated as follows.
The link covering problem corresponding to $\mathrm{P}_{1}$ is no longer
equivalent to an interval covering problem, but it can be approximated as follows. 
Recall that the solution obtained before
the first augmentation problem is a spanning tree of the graph. Each edge
in this tree either belong to some failure scenario $F_i$, in which case 
it comprises a failure scenario in $\Omega_1$, or it is not contained in any
failure scenario. In the latter case the edge can be simply contracted, so
we henceforth assume that all edges of the tree form a scenario in $\Omega_1$.

The augmentation problem now becomes a standard \textbf{connectivity augmentation
problem}, where, given a spanning tree $T$ of a graph $G$, the task is to compute
a minimum-cost set of edges, not in the tree, whose addition to the tree will
increase the size of the minimum cut in the resulting graph to two.
Indeed, on the one hand any set $A$ of edges satisfying that the graph 
$(V, T\cup A)$ has no cut of size one is feasible, as the removal of any 
edge of $T$ cannot disconnect this graph. On the other hand, if a set $A$ is 
such that $(V, T\cup A)$ does contain a cut edge, this edge must belong to $T$
(since $T$ is a spanning tree of $G$). Since all edges of $T$ are assumed to
comprise failure scenarios in $\Omega_1$, this means that $A$ is infeasible 
for the augmentation problem.

It remains to note that the latter connectivity augmentation problem can 
be efficiently approximated within a constant factor. One way to achieve this is
to use the algorithm for survivable network design in~\cite{Jain}.

Consider next the case $i\geq 2$. 
As before, we omit the index $i$ from our notation, as we now discuss the
$i$-th link covering problem for some arbitrary $i\geq 2$.

The set cover LP appropriate for modeling the link covering
problem for the bulk-robust spanning tree problem remain exactly the 
same as before. There is, however, a slight difference in the
design of the separation oracle for the LP. Concretely, the construction 
of the capacitated graph $H$ remains the same, but now violating sets
correspond to sets of edges in minimum cuts (instead of minimum $s$-$t$ cuts),
if the value of the minimum cut is bellow $i+1$. Since minimum cuts
can be found in polynomial time, this separation procedure is also
polynomial.

Finally, the rounding procedure and its analysis remain unchanged. While 
it may seem that the proof of Lemma~\ref{lem:coverproperties} used
the fact that $(\bar V, X\setminus F)$ contains exactly two connected components
$C^s(F)$ and $C^t(F)$, one containing $s$ and the other containing $t$,
the fact that two specific nodes were separated by the cut was never used.
The only property that is used is that $(\bar V, X\setminus F)$ contains exactly
two connected components.
Here we can simply use instead the fact that $(V, X\setminus F)$ contains exactly
two connected components.

This concludes the description of the required modifications.


\subsection{Further extensions}
Let us conclude by discussing some further extensions and implications 
of our techniques. First, our techniques can clearly be applied to other 
bulk-robust network design problems. A treatment of the bulk-robust survivable 
network design problem is deferred to the full version of the paper. 

Also, as we mentioned in the introduction, our methods seem to be suitable
for solving other robust problems in planar graphs. Consider, for example,
the uniform model with varying interdiction costs, where each edge has
an interdiction cost $c(e)\in \mathbb{Z}_{\geq 0}$, and the set of scenarios
is exactly the set of all edge subsets with total interdiction cost at most 
$B\in \mathbb{Z}_{\geq 0}$. Our methods can be used to approximate this
problem provided that a suitable (approximate) separation oracle is provided
for the resulting LP. In general, however, this separation problem coincides
with difficult interdiction problems 
(see e.g.~\cite{israeli_2002_shortestpath,zenklusen_2010_matching} and references therein).



\appendix

\section{Proof of Theorem~\ref{thm:complexity}}\label{apx:complexity}

Recall that a \textbf{hypergraph} is a pair $\HH = (\VV,\EE)$, where $\VV$ is a finite 
set of \textbf{nodes}, and $\EE \subseteq 2^\VV$ is a set of subsets of $\VV$ called \textbf{edges}.
Let $m\in \mathbb{Z}_{\geq 0}$. We say that $\HH$ is \textbf{$m$-uniform} if $|e| = m$ for every 
$e\in \EE$. Observe that $2$-uniform hypergraphs are graphs. 
$\HH$ is \textbf{$m$-partite} if $\VV$ can be partitioned
into $m$ parts $\VV = \VV_1 \cup \cdots \cup \VV_m$ such that for every $e\in \EE$ and for 
every $j\in [m]$ it holds that
$$
|e \cap \VV_j| \leq 1.
$$
A \textbf{vertex cover} of $\HH$
is a set $S\subseteq \VV$ of nodes that touches every edge, i.e. such that $|S\cap e| \geq 1$ holds
for every $e\in \EE$. The \textbf{hypergraph minimum vertex cover} problem is to find
a vertex cover of $\HH$ of minimum cardinality.

Our reduction relies on the following hardness-of-approximation result of 
Guruswami, Sachdeva and Saket~\cite{guruswami2015inapproximability}.

\begin{theorem}[Guruswami et. al.~\cite{guruswami2015inapproximability}]\label{thm:hard}
 For any $\epsilon > 0$ and any $m\geq 4$ it is NP-hard to approximate the 
minimum hypergraph vertex cover problem within a factor $\frac{m}{2} - 1 + \frac{1}{2m} - \epsilon$,
even when the hypergraph is restricted to be $m$-uniform and $m$-partite, and the $m$-partition
is given as input.
\end{theorem}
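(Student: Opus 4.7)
The plan is to prove this sharp hypergraph vertex cover hardness by a direct PCP-style reduction from Multi-Layered Smooth Label Cover, with the $m$ layers mapped one-to-one onto the $m$ partition classes of the output hypergraph, and with a biased Long Code gadget calibrated to produce exactly the constant $\frac{m}{2}-1+\frac{1}{2m}$ rather than a weaker round number. The starting point is an $m$-Layered Smooth Label Cover instance $\mathcal{L}$ with layers $V_1,\ldots,V_m$, common label alphabet $[R]$ and projection constraints between layers, obtainable via Raz's parallel-repetition theorem applied to $3\text{-}\textsc{Sat}$-derived Label Cover. We may assume perfect completeness, arbitrarily small soundness $\delta > 0$, and smoothness: for every layer vertex $u$ and every small $S\subseteq [R]$, a random neighbouring constraint acts injectively on $S$ with probability close to $1$.

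The reduction builds a hypergraph $\HH=(\VV,\EE)$ whose partition classes are $\VV_1,\ldots,\VV_m$, with $\VV_j$ containing, for every $u\in V_j$, a biased Long Code block of $2^R$ vertices indexed by $x\in\{0,1\}^R$. Hyperedges are placed as follows: for every $m$-tuple $(u_1,\ldots,u_m)\in V_1\times\cdots\times V_m$ lying in a common Label Cover constraint, and for every $m$-tuple $(x_1,\ldots,x_m)$ that \emph{fails} a combinatorial test $T$ depending on the Label Cover projections, add the hyperedge $\{(u_1,x_1),\ldots,(u_m,x_m)\}$. The test $T$ is a $p$-biased covering test with $p$ chosen so that $1/(2p)=\frac{m}{2}-1+\frac{1}{2m}$; its "failing" configurations are exactly those in which none of the $x_i$'s, after composition with the corresponding projection, takes the value $1$ on the intended label, but with a slack that allows the biased analysis to go through.

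For completeness I would take any satisfying assignment $\ell$ of $\mathcal{L}$ and define the vertex cover $\{(u,x)\mid x_{\ell(u)}=1\}$ across all blocks. By construction of $T$, every placed hyperedge contains at least one such "$1$"-vertex, and the $p$-biased structure yields density exactly $p$ per class, hence relative vertex cover size $p$. For soundness, assume a vertex cover $\mathcal{S}$ of relative size strictly less than $1/2 - \epsilon$; encode each block as a Boolean function $f_u\colon\{0,1\}^R\to\{0,1\}$ of density $\mu(f_u)<1/2-\epsilon$. Hyperedges being covered translates into a correlation inequality on the $f_u$'s under the $p$-biased product measure; via Fourier analysis on the biased cube (or a Mossel-style invariance principle) combined with Smoothness, extract from each $f_u$ a short list of influential coordinates, and use them as a randomised labelling of $\mathcal{L}$. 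A standard averaging argument then shows this labelling satisfies a constant fraction of constraints, contradicting the soundness $\delta$ of $\mathcal{L}$ once $\delta$ is chosen small enough.

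The main obstacle is the calibration and soundness analysis of the test $T$. Getting the exact constant $\frac{m}{2}-1+\frac{1}{2m}$ rather than a looser bound such as $m/2$ or $m-1$ forces the bias $p$ and the set of failing configurations to be chosen very tightly: one needs a Dictator-versus-quasirandom separation theorem on the $p$-biased Boolean cube saying that any Boolean function of density close to $1/2$ passing $T$ with probability noticeably above $1-p$ must correlate with a single coordinate. This is the technical heart of Guruswami--Sachdeva--Saket and requires a matching Fourier/noise-stability bound; everything else in the reduction — the Label Cover setup, the completeness calculation, and the extraction of a labelling from influential coordinates — is essentially bookkeeping around this sharp threshold statement.
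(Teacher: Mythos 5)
Theorem~\ref{thm:hard} is not proved in this paper at all: it is imported verbatim from Guruswami, Sachdeva and Saket~\cite{guruswami2015inapproximability} and used as a black box in the reduction of Appendix~\ref{apx:complexity}. The only ``proof'' the paper offers is the citation, so the relevant question is whether your sketch would stand on its own as a proof of the cited result. It would not. What you have written is a plausible high-level description of the strategy of~\cite{guruswami2015inapproximability} --- a multi-layered smooth Label Cover outer verifier whose layers become the $m$ partition classes, biased long-code blocks per vertex, and a covering test whose failing tuples become hyperedges --- but you explicitly defer the entire soundness analysis to an unproved ``dictator-versus-quasirandom separation theorem on the $p$-biased cube.'' That separation theorem \emph{is} the proof; without stating it precisely (which bias, which density threshold, which notion of influence, and how smoothness is used to decode labels consistently across layers) and establishing it, nothing in your argument forces the specific constant $\frac{m}{2}-1+\frac{1}{2m}$, and the identical outline would ``prove'' essentially any other constant.

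Beyond the deferred lemma there are two concrete gaps. First, the calibration is asserted rather than derived: setting $1/(2p)=\frac{m}{2}-1+\frac{1}{2m}$ presumes completeness $p$ and soundness threshold $1/2$, but you never argue why $1/2$ (rather than $1-o(1)$, as in the non-partite $(m-1)$-hardness of Dinur et al.) is the correct soundness bound in the $m$-partite setting; that factor-of-two loss is precisely the delicate point that distinguishes the partite result. Second, in a layered Label Cover instance the constraints are between \emph{pairs} of layers, so ``every $m$-tuple lying in a common constraint'' is not well defined; you must specify the distribution over $m$-tuples of blocks and the layer weights so that the list-decoding/averaging step yields a good labelling for some pair of layers --- this is where the layered-PCP machinery does real work and cannot be dismissed as bookkeeping. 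For the purposes of the present paper none of this matters: citing~\cite{guruswami2015inapproximability}, as the author does, is the appropriate and complete justification of the statement.
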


We show that the minimum hypergraph vertex cover problem on $k$-uniform and $k$-partite
hypergraphs can be transformed to an equivalent instance of bulk-robust $s$-$t$ connection
with diameter $k$, provided that the $k$-partition is given as input.

To this end let $\HH = (\VV,\EE)$ be a $k$-uniform, $k$-partite hypergraph, and 
let $\VV = \VV_1 \cup \cdots \cup \VV_k$ be the given $k$-partition of $\VV$.
We construct the series-parallel graph to be the input of the bulk-robust $s$-$t$
connection problem as follows.

Let $p = |\EE|$ and $n_j = |\VV_j|$ for $j\in [k]$.
For every $j\in [k]$ we construct an ordering $e^j_1, \cdots, e^j_p$ of $\EE$
corresponding to $\VV_j$. This ordering is constructed as follows. First,
order the nodes in $\VV_j$ in an arbitrary way, say $v^j_1, \cdots, v^j_{n_j}$.
Now, construct the ordering of edges by first including all edges incident
$v^j_{1}$ in any order, then all edges incident to $v^j_2$ in any order and so
on, until all vertices are traversed. Since $\VV_j$ is a part in a $k$-partition,
the latter procedure succeeds in producing an ordering of $\EE$, as every edge 
is incident to exactly one node in $\VV_j$. By design, the latter construction
satisfies the following useful property that we will use later. For every node 
$v\in \VV$ there exists an index $j\in [k]$ such that the set of edges
$\EE_v = \{e\in \EE \,\mid\, v\in e\}$ incident to $v$ appear as a sub-sequence in the
$j$-th ordering. This index $j$ can be chosen such that $v\in \VV_j$.

Next, start constructing the series-parallel graph $G = (V,E)$. First, include the
nodes $s,t$ and connect them by $k$ node-disjoint paths $P^1, \cdots, P^k$ 
(the nodes $s$ and $t$ are common to all paths). 
Each path $P^j$ contains exactly $p$ edges $f^j_1, \cdots, f^j_p$, appearing
in this order when $P^j$ is traversed from $s$ to $t$. The edge $f^j_l$ is associated
with the edge $e\in \EE$ of the hypergraph in the $l$-th position of the $j$-th ordering,
i.e., the edge $e^j_l$. Clearly, every edge $e\in \EE$ is associated with exactly 
one edge of $G$ on every path $P^j$ and thus, in total, it is associated with $k$
edges of $G$.

Next, for every $j\in [k]$ and $v\in \VV_j$ add the edge $\alpha_v$ to $G$, connecting
two nodes $w^j_v$ and $z^j_v$ on $P^j$. These nodes are selected so that the set of
edges between these nodes on the path $P^j$ are exactly those that are associated with
edges in $\EE_v$. Since the hypergraph edge in $\EE_v$ appear as a sub-sequence in the
order used to construct $P^j$, such two nodes $w^j_v$ and $z^j_v$ exist.
It is straightforward to verify that $G$ is series-parallel.

To complete the construction of the graph we set the weights of all edges on the paths $P^j$ for
$j\in [k]$ to zero, while the weight of edges of the type $\alpha_v$ for $v\in \VV$
is set to one.

To conclude the reduction it remains to specify the scenario set $\Omega$ of the
bulk-robust $s$-$t$ connection problem. For $e\in \EE$ we include in $\Omega$ a single failure
scenario $F_e \subseteq E$. The set $F_e$ contains the $k$ edges, one from every 
path $P^j$, $j\in [k]$, that are associated with $e$. Since every edge $e\in \EE$
is associated with exactly one edge on every such path, we have $|F_e| = k$, as required.

We conclude the proof by showing that the resulting instance of the bulk-robust $s$-$t$
connection problem is equivalent to the hypergraph vertex cover instance. Formally, 
we show that a solution to the hypergraph vertex cover instance can 
be transformed to a solution of the bulk-robust $s$-$t$ connection instance with 
the same cost, and vice versa.

Assume first that $S\subseteq \VV$ is solution to the hypergraph vertex cover problem. Construct
a solution $X \subseteq E$ to the the bulk-robust $s$-$t$ connection problem as follows.
Include in $X$ all paths $P^j$ for $j\in [k]$ (at zero cost), as well as all edges $\alpha_v$
such that $v\in S$. This solution has cost $|S|$, as required.
To see that this solution is feasible, consider any $F = F_e \in \Omega$.
Since $S$ is a vertex cover, there exists some $v\in S$ such that $v\in e$. Let $j\in [k]$
be such that $v\in \VV_j$. Observe that the path starting at $s$, following $P^j$ until $w^j_v$,
then crossing $\alpha_v$ and then continuing to $t$ on $P^j$ is contained in $X\setminus F$.

Assume next that $X\subseteq E$ is a feasible solution to the bulk-robust $s$-$t$
connection instance. Let $S = \{v\in \VV \,\mid\, \alpha_v\in X\}$. By the cost
structure of the reduction we know that the cost of $X$ is exactly $|S|$. It remains
to prove that $S$ is a vertex cover. Consider any $e\in \EE$. Since $X$ is feasible,
there exists an $s$-$t$ path $Q\subseteq X\setminus F_e$. This $s$-$t$ path must use
some edge of the form $\alpha_v$ for $v\in S$ as, by construction, every path $P^j$ 
intersects every failure scenario, and these are node-disjoint $s$-$t$ paths. 
Furthermore, we can assume that $\alpha_v$ is the only
such edge, as from every path $P^j$, only one edge is contained in $F_e$.
Now, this edge $\alpha_v\in Q$ connects some nodes $w^j_v$ and $z^j_v$ on some path $P^j$.
Consequently, the unique edge in $F_e\cup P^j$ is contained on the sub-path connecting
$w^j_v$ and $z^j_v$ and hence, by construction, $v\in e$. We conclude that $S$ is 
a vertex cover, as required.

The proof of the theorem now directly follows from the latter reduction and 
Theorem~\ref{thm:hard}. \qed



\end{document}